\def\N{\mathbb{N}}
\numberwithin{equation}{section}
\theoremstyle{definition}
\newtheorem{theo}{Theorem}[section]
\newtheorem{theorem}[theo]{Theorem}
\newtheorem{lemma}[theo]{Lemma}
\newtheorem{corollary}[theo]{Corollary}
\theoremstyle{remark}
\def\XXint#1#2#3{{\setbox0=\hbox{$#1{#2#3}{\int}$ }
\vcenter{\hbox{$#2#3$ }}\kern-.6\wd0}}
\title{The Gross-Pitaewskii equation with time and space dependent coefficients}
\author[F.~Lai]{Federico Lai}
\address {Federico Lai \newline \indent
Dipartimento di Matematica, Universit\`a di Pisa \newline \indent
Largo Bruno Pontecorvo, 5, 56127 Pisa, Italy}
\email{federico.lai@phd.unipi.it}
\begin{document}

\subjclass[2020] {35B25, 35D30, 35K55, 35K57, 49J45}
\keywords{Gross-Pitaewskii equation, elliptic regularization, segregated systems}

\begin{abstract}
We study the existence of weak solutions of a generalized Gross-Pitaewskii equation, with time and space dependent coefficients that could blow up or vanish asymptotically in time, with initial data not necessarily segregated. We also study the asymptotic behavior in time of these solutions, including cases in which the segregation phenomenon appears.
\end{abstract}

\maketitle
\setcounter{tocdepth}{1}
\tableofcontents

\section{Introduction}
In recent years, the study of the Gross-Pitaewskii equation has become particularly interesting. This is a parabolic system that has the following form:
\begin{equation} \label{gpo1}
\partial_t w_{i} -d_i\Delta w_{i}=f_i(w_{i})-\beta w_{i}\sum_{j\ne i}a_{ij}w^2_{j} \quad \text{in} \, \Omega \times \mathbb{R}^+, \,\, \forall i=1,\dots,k,
\end{equation}
where $\Omega \subset \mathbb{R}^d$ is a bounded and smooth domain, $k \ge 2$ is a fixed integer, $d_i,a_{i,j}>0$ are constants, $f_i$ are functions, $\beta \ge0$ is a parameter, and $w=(w_1,\dots,w_k)$ is the solution that you want to find. \\
This parabolic system is widely used to characterize the dynamics of interaction between $k$ species (see \cite{danwanzha}). Here, $\beta$ is an interaction coefficient, and a larger $\beta$ indicates a less interaction between species $i$ and $j$. Each $w_i(x,t) $ represents the value of species $i$ at point $x$ at time $t$. For these reasons, the Gross-Pitaewskii equation is also coupled with suitable initial data $v_0=(v_{01}, \dots, v_{0k})$, boundary conditions $g=(g_1,\dots,g_k)$, and $L^\infty$ bounds on the solutions. \\
A very interesting problem is to find solutions $w_\beta$ of \eqref{gpo1} for every fixed $\beta$, passing to the limit for $\beta \to +\infty$.
This has been done in \cite{audsertil}, \cite{parab 1}, \cite{danwanzha}, \cite{parab 2}, and \cite{parab 3}: the limit function $w_\infty=(w_{\infty 1}, \dots, w_{\infty,k})$ satisfies the segregation condition:
\begin{equation*}
w_{\infty i}\cdot w_{\infty j}=0 \quad \text{a.e. in} \, \Omega \times \mathbb{R}^+, \, \forall i \ne j.
\end{equation*}
A necessary request to pass to the limit for $\beta \to +\infty$ is that the initial datum $v_0$ must satisfy the segregation condition: $v_{0i} \cdot v_{0j}=0$ a.e. in $\Omega$. \\
In \cite{ellip 1}, \cite{ellip 2}, \cite{ellip 3}, \cite{ellip 4}, \cite{freeb ell 1}, and \cite{freeb ell 2}, the elliptic counterpart of this parabolic problem has been studied. \\
In this work we study a generalized Gross-Pitaewskii equation, coupled with initial and boundary data, of this form:
\begin{equation} \label{gpg1}
\begin{cases}
r_i(x)\partial_t w_{i} -d_i(t)\Delta w_{i}=f_i(t,x,w_{i})-\beta(t,x)w_{i}\sum_{j \ne i}a_{ij}(t,x)w^2_{j} \quad \text{in} \, \Omega \times \mathbb{R}^+ \quad \forall i=1,\dots,k, \\
w_{i}(0,x)=v_{0,i}(x) \quad \text{in} \,\, \Omega, \quad \forall i=1, \dots,k, \\
w_{i}=g_i \quad \text{on} \,\, \partial \Omega, \quad \forall i=1,\dots,k, \\
0 \le w_{i} \le 1 \quad \forall i=1,\dots,k,
\end{cases}
\end{equation}
where $r_i$, $d_i$, $f_i$, $\beta$, $a_{ij}$ are coefficients depending on time and/or space that, for $t \to +\infty$, could vanish or blow up. \\
The main difference between \eqref{gpo1} and \eqref{gpg1} is that, in the latter, $\beta$ is a function, that could blow up asymptotically in time.
Thanks to the fact that, for bounded times, $\beta$ is bounded, we have the flexibility to choose the initial datum $v_0$ not necessarily segregated. This is a method to asymptotically segregate a function over time. It is reasonable to consider dynamics where segregation appears later, such as in human population dynamics. \\
In \eqref{gpg1} also the other coefficients are time and/or space dependent, including cases in which they blow up or vanish asymptotically in time. \\
We aim to prove the existence of weak solutions of \eqref{gpg1}, using the elliptic regularization technique of \cite{audsertil}: this technique is very flexible, introduced in \cite{apprell 4}, and developed in \cite{audsertil}, \cite{apprell 3}, \cite{apprell 1}, and \cite{apprell 2}. 
This is the strategy of the proof of the existence of a weak solution: taking a small parameter $\varepsilon>0$, we want to find a minimizer $v_\varepsilon$ on a suitable space $\mathcal{U}$ of the perturbed functional
\begin{equation*}
\mathcal{F}_\varepsilon(v):=\int_0^{+\infty} \int_\Omega \frac{e^{-\frac{t}{\varepsilon}}}{\varepsilon} \{ \varepsilon r(x) \cdot |\partial_t v|^2 + d(t) \cdot |\nabla v|^2 -2F(t,x,v) + \frac{\beta(t,x)}{2} \braket{v^2,A(t,x)v^2} \} \, dxdt,
\end{equation*}
where 
\begin{align*}
& r(x) \cdot |\partial_t v|^2:= \sum_{i=1}^k r_i(x) |\partial_t v_i|^2; \quad d(t) \cdot |\nabla v|^2:= \sum_{i=1}^k d_i(t) |\nabla v_i|^2; \\ 
&F(t,x,v(x)):= \sum_{i=1}^k \int_0^{v(x)} f_i(t,x,l) \, dl; \quad \braket{v^2, A(t,x)v^2}:=\sum_{i,j=1}^k a_{ij}(t,x)v^2_iv^2_j.
\end{align*}
This minimizer $v_\varepsilon$ solves, in a weak sense, for every $i=1, \dots, k$, the Euler-Lagrange equation:
\begin{equation*}
-\varepsilon r_i(x)\partial_{tt}v_{\varepsilon,i} +r_i(x)\partial_t v_{\varepsilon,i} -d_i(t)\Delta v_{\varepsilon,i}=f_i(t,x,v_{\varepsilon,i})-\beta(t,x)v_{\varepsilon,i}\sum_{j\ne i} a_{ij}(t,x)v^2_{\varepsilon,j} \quad \text{in} \, \Omega \times \mathbb{R}^+.
\end{equation*}
Finally, we aim to pass to the limit for $\varepsilon \to 0$, leveraging uniform estimates on $\varepsilon$ to obtain the weak solution to \eqref{gpg1}. \\
The main difficulty respect to \cite{audsertil} is that we need suitable assumption on time derivatives of the coefficients to obtain uniform estimates. \\
In this work we also study the asymptotic behavior of a weak solution of \eqref{gpg1}: considering for example the following significant case of \eqref{gpg1}
\begin{equation} \label{gpg2}
\begin{cases}
\partial_t w_{i} -\Delta w_{i}=f_i(w_{i})-\beta(t)w_{i}\sum_{j \ne i}a_{ij}w^2_{j} \quad \forall i=1,\dots,k, \\
w_{i}(0,x)=v_{0,i}(x) \quad \text{in} \,\, \Omega, \quad \text{in} \, \Omega \times \mathbb{R}^+ \quad \forall i=1, \dots,k, \\
w_{i}=g_i \quad \text{on} \,\, \partial \Omega, \quad \forall i=1,\dots,k, \\
0 \le w_{i} \le 1 \quad \forall i=1,\dots,k,
\end{cases}
\end{equation}
with the function $\beta$ such that $\lim_{t \to +\infty} \beta(t)=+\infty$, we show the segregation phenomenon for $t \to +\infty$.
To obtain this result we leverage the following estimate:
\begin{equation} \label{gpg3}
\int_T^{T+\tau} \int_{\Omega_\delta} |\nabla w_i|^2 +\beta(t) w^2_i \sum_{j \ne i} a_{ij}w^2_j \, dxdt \le C_{\delta,\tau},
\end{equation}
for every $i=1, \dots,k$, for every $T>0$, and for every $\delta>0$ and $\tau>0$; where \\ $\Omega_\delta=\set{x \in \Omega \,:\, d(x, \partial \Omega) > \delta}$, and $C_{\delta,\tau}$ is a constant dependent only on $\delta$ and $\tau$.
To obtain \eqref{gpg3} we choose suitable smooth functions to test with a weak solution of \eqref{gpg2}. \\

\section{Functional setting and main results}
Let $\Omega \subset \mathbb{R}^d$ be an open, bounded and smooth domain. \\
Let  $k \ge 2$ be a fixed integer.

\subsection{Hypotheses on the coefficients} \label{hyponcoeff} 
Let us consider $U_1,U_2,U_3: \Omega \to \mathbb{R}^+$ measurable functions that satisfy
\begin{equation*}
U_1U_3 \in L^1(\Omega); \quad \quad U_2 \in L^1(\Omega).
\end{equation*}
\begin{enumerate}
\item Let us consider $A=A(t,x):[0,+\infty) \times \Omega \to Sym_{k\times k}$, $A(t,x):=(a_{ij}(t,x))_{i,j}$ a function that satisfies:
\begin{itemize} \label{ip1}
\item $a_{ii}=0; \quad a_{ij}(t,x)=a_{ji}(t,x) \ge 0 \,\,\forall i\ne j, \quad \forall (t ,x) \in [0,+\infty)\times \Omega;$
 \item $a_{ij} \,\, \text{are continuous and differentiable in $t$};$
\item $\exists \, A_1,A_2 \ge 0 \,\, \text{such that} \,\, a_{ij}(t,x),|\partial_ta_{ij}(t,x)| \le A_1e^{A_2t}U_1(x)   \,\, \forall \,(t,x) \in [0,+\infty) \times \Omega, \,\, \\ \forall \, i,j=1\dots,k.$
\end{itemize}
\item Let us consider $f_i=f_i(t,x,s): [0,+\infty) \times \Omega \times \mathbb{R} \to \mathbb{R}$ functions that satisfy, for every $i=1,\dots,k$:
\begin{itemize} \label{ip2}
\item $f_i \,\, \text{are continuous in $s$};$
\item $f_i(\cdot,\cdot,s) \ge 0 \,\, \text{in} \,\, (-\infty,0], \quad f_i(\cdot,\cdot,s) \le 0 \,\, \text{in} \,\, [1,+\infty), \, \forall \, (t,x) \in [0,+\infty) \times \Omega;$
\item $f_i \,\, \text{are continuous and differentiable in $t$};$
\item $\exists \, H_1, H_2 \ge 0 \,\, \text{such that} \,\, |f_i(t,x,s)|,|\partial_t f_i(t,x,s)| \le H_1e^{H_2t}U_2(x) \,\, \\ \forall \, (t,x,s) \in [0,+\infty) \times \Omega \times I, \,\, \forall \, i=1,\dots,k,$
\end{itemize}
where $I$ is an open set such that $[0,1] \subset I$. \\
We are mainly interested to functions $f_i=f_i(t,x,s): [0,+\infty) \times \Omega \times [0,1] \to \mathbb{R}$, with any extension in $[0,+\infty) \times \Omega \times \mathbb{R}$ that satisfies the previous hypotheses. \\
\item Let us consider $\beta=\beta(t,x): [0,+\infty) \times \Omega \to \mathbb{R}^+$ a function that satisfies:
\begin{itemize} \label{ip5}
\item $\beta \,\, \text{is continuous and differentiable in $t$};$
\item $\exists \, C_1,C_2>0 \,\, \text{such that} \,\, \beta (t,x),|\partial_t \beta(t,x)| \le C_1 e^{C_2t}U_3(x) \,\, \forall \, (t,x) \in [0,+\infty) \times \Omega.$
\end{itemize}
\item Let us consider $d_i=d_i(t): [0,+\infty)  \to \mathbb{R}^+$ functions that satisfy, for every $i=1,\dots,k$:
\begin{itemize} \label{ip6}
\item $\exists D_1 > 0, D_2 \ge 0 \, \text{and a locally bounded function $\frac{1}{\rho}=\frac{1}{\rho(t)}>0$ in $[0,+\infty)$} \,\, \text {such that} \,\, \\ \rho(t) \le d_i(t) \le D_1e^{D_2t} \,\, \forall \, t \in [0,+\infty), \, \forall i=1,\dots,k;$
\item $d_i \,\, \text{are continuous and differentiable};$
\item $\exists \, D_3 \ge 0,  \,\, \text{such that} \,\,  |d'_i(t)| \le D_3 d_i(t) \,\, \forall \, t \in [0,+\infty),  \,\, \forall \, i=1,\dots,k.$
\end{itemize}
\item Let us consider $r_i=r_i(x):\Omega \to \mathbb{R}$ measurable functions that satisfy, for every $i=1,\dots,k$:
\begin{itemize} \label{ip7}
\item $\exists \, R_1,R_2>0 \quad \text{such that} \,\, R_1 \le r_i(x) \le R_2 \quad \forall \, x \in \Omega, \,\, \forall \, i=1,\dots,k.$
\end{itemize}
\end{enumerate}

\subsection{Notations}
Given a vector-valued function $v=(v_1, \dots , v_k)$, we write:
$$|v|^2:= \sum_{i=1}^k |v_i|^2, \quad |\partial_tv|^2:= \sum_{i=1}^k |\partial_tv_i|^2, \quad |\nabla v|^2:=\sum_{i=1}^k |\nabla v_i|^2.$$
We write, from now on, for simplicity 
$$||\cdot||:=||\cdot||_{L^1(\Omega)}.$$
We also write, considering \cref{hyponcoeff}
$$\braket{v^2, A(t,x)v^2}:=\sum_{i,j=1}^k a_{ij}(t,x)v^2_iv^2_j, \quad d(t) \cdot |\nabla v|^2:= \sum_{i=1}^k d_i(t) |\nabla v_i|^2, \quad r(x) \cdot |\partial_t v|^2:= \sum_{i=1}^k r_i(x) |\partial_t v_i|^2.$$
We define, for every $i=1, \dots,k$ the function $F_i : [0,+\infty)\times \Omega \times \mathbb{R} \to \mathbb{R}$:
\begin{equation} \label{ip3}
F_i(t,x,s)=\int_0^s f_i(t,x,l) \, dl,
\end{equation}
and we introduce:
$$F(t,x,v):=\sum_{i=1}^k F_i(t,x,v_i).$$

\subsection{Functional setting}
Let
\begin{equation} \label{ip13}
v_0=(v_{0,1}, \dots, v_{0,k}) \in H^1(\Omega)^k
\end{equation}
be a function satisfying the bounds
\begin{equation} \label{ip11}
0 \le v_0 \le 1, \quad (\text{that is} \,\,0 \le v_{0,i} \le 1 \quad \forall i=1, \dots, k),
\end{equation}
and let
\begin{equation} \label{ip12}
g=(g_{1}, \dots , g_{k}) \in H^{\frac{1}{2}}(\partial \Omega)^k
\end{equation}
be the trace of $v_0$ on $\partial \Omega$. \\
Let us define the following functional spaces:
\begin{align} \label{ip9}
&\mathcal{U}_{v_0,g}:=\set{u \in \cap_{T>0} H^1(\Omega \times (0,T))^k  \, : \, u(\cdot,0)=v_0, \,\, u(\cdot,t)=g \, \text{on} \, \partial \Omega \,\, \text{for a.e.} \, t>0}; \\ \label{ip10}
&\mathcal{U}_{v_0}:=\set{u \in \cap_{T>0} H^1(\Omega \times (0,T))^k  \, : \, u(\cdot,0)=v_0}.
\end{align}
For every $\varepsilon \in (0,\bar{\varepsilon})$ we consider the following class of functionals $\mathcal{F}_\varepsilon:\mathcal{U} \to \bar{\mathbb{R}}$:
\begin{equation*}
\mathcal{F}_\varepsilon(v):=\int_0^{+\infty} \int_\Omega \frac{e^{-\frac{t}{\varepsilon}}}{\varepsilon} \{ \varepsilon r(x) \cdot |\partial_t v|^2 + d(t) \cdot |\nabla v|^2 -2F(t,x,v) + \frac{\beta(t,x)}{2} \braket{v^2,A(t,x)v^2} \} \, dxdt,
\end{equation*}
where $\mathcal{U}$ is $\mathcal{U}_{v_0,g}$ or $\mathcal{U}_{v_0}$.

\subsection{Main results}
The main results that we want to prove in this work are the following:
\begin{theorem}[Existence theorem] \label{existence}
Under the conditions of \cref{hyponcoeff}, and \eqref{ip13}, \eqref{ip11}, \eqref{ip12}, considering \eqref{ip9}, \eqref{ip10}, there exists $w=(w_1,\dots,w_k) \in \mathcal{U}$, that is a weak solution, in the sense of $H^{-1}(\Omega \times (0,T))^k$ for all $T>0$ (and, in particular, in the sense of distribution of $\Omega \times \mathbb{R}^+$), of
\begin{equation*}
\begin{cases}
r_i(x)\partial_t w_{i} -d_i(t)\Delta w_{i}=f_i(t,x,w_{i})-\beta(t,x)w_{i}\sum_{j \ne i}a_{ij}(t,x)w^2_{j} \quad \forall i=1,\dots,k, \\
w_{i}(0,x)=v_{0,i}(x) \quad \text{in} \,\, \Omega, \quad \forall i=1, \dots,k, \\
w_{i}=g_i \quad \text{on} \,\, \partial \Omega, \quad \forall i=1,\dots,k, \\
0 \le w_{i} \le 1 \quad \forall i=1,\dots,k,
\end{cases}
\end{equation*}
if $\mathcal{U}=\mathcal{U}_{v_0,g}$, or of
\begin{equation*}
\begin{cases}
r_i(x)\partial_t w_{i} -d_i(t)\Delta w_{i}=f_i(t,x,w_{i})-\beta(t,x)w_{i}\sum_{j \ne i}a_{ij}(t,x)w^2_{j} \quad \forall i=1,\dots,k, \\
w_{i}(0,x)=v_{0,i}(x) \quad \text{in} \,\, \Omega, \quad \forall i=1, \dots,k, \\
0 \le w_{i} \le 1 \quad \forall i=1,\dots,k,
\end{cases}
\end{equation*}
if $\mathcal{U}=\mathcal{U}_{v_0}$.
\end{theorem}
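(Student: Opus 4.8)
The plan is to carry out, in this non-autonomous variable-coefficient situation, the elliptic-regularization scheme of \cite{audsertil}. Fix $\varepsilon>0$; if some of the data $d_i,\beta,a_{ij},f_i$ are unbounded in time one first replaces them by truncations keeping the structural hypotheses, which is harmless because these coefficients and nonlinearities degenerate or blow up only asymptotically, hence are uniformly controlled on each slab $\Omega\times(0,T)$, and this truncation will be removed at the very end. Introduce the exponentially weighted functional
\begin{equation*}
\mathcal{F}_\varepsilon(v)=\int_0^{+\infty}\!\int_\Omega \frac{e^{-t/\varepsilon}}{\varepsilon}\sum_{i=1}^{k}\Bigl\{\varepsilon\,r_i(x)\,|\partial_t v_i|^2+d_i(t)\,|\nabla v_i|^2-2F_i(t,x,v_i)+\frac{\beta(t,x)}{2}\sum_{j\ne i}a_{ij}(t,x)\,v_i^2 v_j^2\Bigr\}\,dx\,dt,
\end{equation*}
where $F_i(t,x,\cdot)$ is a primitive of $f_i(t,x,\cdot)$, and minimize it over $\mathcal{U}$, which encodes the pointwise bounds $0\le v_i\le1$, the initial datum $v_{0,i}$ at $t=0$, and (when $\mathcal{U}=\mathcal{U}_{v_0,g}$) the lateral datum $g_i$ on $\partial\Omega$. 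By the direct method a minimizer $v_\varepsilon$ exists: hypotheses \ref{ip1}, \ref{ip2}, \ref{ip5}, \ref{ip6}, \ref{ip7}, \ref{ip11}, \ref{ip12}, \ref{ip13} (and \ref{ip9}, \ref{ip10} in the two cases) should render $\mathcal{F}_\varepsilon$ bounded below, coercive and sequentially weakly lower semicontinuous on the natural weighted Sobolev space, with $\mathcal{U}$ weakly closed; moreover truncating each component to $[0,1]$ does not increase $\mathcal{F}_\varepsilon$ (by the sign/growth conditions on $f_i$ and the monotonicity of the remaining terms under truncation), so $0\le v_{\varepsilon,i}\le1$ is automatic.

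The minimizer solves the Euler--Lagrange system
\begin{equation*}
-\varepsilon\, r_i(x)\,\partial_{tt}v_{\varepsilon,i}+r_i(x)\,\partial_t v_{\varepsilon,i}-d_i(t)\,\Delta v_{\varepsilon,i}=f_i(t,x,v_{\varepsilon,i})-\beta(t,x)\,v_{\varepsilon,i}\sum_{j\ne i}a_{ij}(t,x)\,v_{\varepsilon,j}^2,
\end{equation*}
with $v_{\varepsilon,i}(0,\cdot)=v_{0,i}$ and $v_{\varepsilon,i}=g_i$ on $\partial\Omega$ (the latter only if $\mathcal{U}=\mathcal{U}_{v_0,g}$). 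Following \cite{audsertil}, I would then prove a family of estimates \emph{uniform in $\varepsilon$}: comparing $\mathcal{F}_\varepsilon(v_\varepsilon)$ with a fixed competitor gives $\mathcal{F}_\varepsilon(v_\varepsilon)\le C$ and hence a bound, in $L^2$ with weight $e^{-t/\varepsilon}/\varepsilon$, on $\nabla v_\varepsilon$ and on $\sqrt{\varepsilon}\,\partial_t v_\varepsilon$; combining this with a monotonicity-type control of the spatial energy $t\mapsto\int_\Omega|\nabla v_{\varepsilon,i}(t)|^2\,dx$ and with the equation, one upgrades it to a bound for $v_\varepsilon$ in $H^1(\Omega\times(0,T))$ together with the vanishing, as $\varepsilon\to0$, of the quantities of order $\varepsilon$ built from $\partial_t v_\varepsilon$ and $\partial_{tt}v_\varepsilon$, on every finite cylinder $\Omega\times(0,T)$. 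The local ellipticity and boundedness of the coefficients guaranteed by the hypotheses is exactly what keeps all these constants finite on $(0,T)$, and the $x$-dependence of $r_i$ has to be carried through the integration-by-parts identities.

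With these bounds, a diagonal extraction yields a subsequence with $v_\varepsilon\rightharpoonup w$ weakly in $H^1_{\mathrm{loc}}(\Omega\times\mathbb{R}^+)$ and, by Aubin--Lions, strongly in $L^2_{\mathrm{loc}}$ and a.e.; hence $0\le w_i\le1$, and $w\in\mathcal{U}$, since traces are weakly continuous (so $w_i=g_i$ on $\partial\Omega$) and since $v_\varepsilon$ is bounded in $H^1(0,T;L^2(\Omega))\hookrightarrow C([0,T];L^2(\Omega))$, which identifies $w_i(0,\cdot)=v_{0,i}$. Passing to the limit in the weak formulation of the Euler--Lagrange system: $\varepsilon\,r_i\partial_{tt}v_{\varepsilon,i}\to0$ in $H^{-1}_{\mathrm{loc}}$; $f_i(t,x,v_{\varepsilon,i})\to f_i(t,x,w_i)$ by continuity and dominated convergence; the cubic coupling $\beta\,v_{\varepsilon,i}\sum_{j\ne i}a_{ij}v_{\varepsilon,j}^2\to\beta\,w_i\sum_{j\ne i}a_{ij}w_j^2$ in $L^1_{\mathrm{loc}}$, by the $L^\infty$ bound and the local integrability of $\beta a_{ij}$; and the linear parabolic part converges by the weak $H^1_{\mathrm{loc}}$ convergence. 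One obtains the stated equation in $H^{-1}(\Omega\times(0,T))^k$ for every $T>0$, hence in $\mathcal{D}'(\Omega\times\mathbb{R}^+)$. Finally, if the data had been truncated, a further diagonal limit removes the truncation, using that on each $\Omega\times(0,T)$ the truncated data coincide with the original ones once the truncation parameter is large.

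The step I expect to be the main obstacle is obtaining the a priori estimates \emph{genuinely uniform in $\varepsilon$} in the presence of non-constant---and possibly degenerate or unbounded---coefficients: one has to localize every computation in time, track carefully how the constants depend on $\inf_{(0,T)}d_i$, $\sup_{(0,T)}\beta$, $\|r_i\|_\infty$ and the structural bounds on $f_i$, and verify that the monotonicity and energy identities of \cite{audsertil}, immediate in the autonomous case, still go through when $r_i=r_i(x)$ multiplies $\partial_t$ and when $d_i=d_i(t)$, $\beta=\beta(t,x)$ vary. A secondary difficulty is the preliminary truncation of unbounded data and the passage to the limit in that parameter, and checking that $\mathcal{F}_\varepsilon$ is well defined, bounded below and coercive on $\mathcal{U}$ under the precise hypotheses \ref{ip5}--\ref{ip13}. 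Once the uniform estimates are in place, $w\in\mathcal{U}$ and the weak formulation follow from the soft compactness arguments above.
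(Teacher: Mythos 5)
Your overall strategy coincides with the paper's: minimize the exponentially weighted functional $\mathcal{F}_\varepsilon$ over $\mathcal{U}$, derive the elliptic-regularized Euler--Lagrange system, obtain $\varepsilon$-uniform $H^1(\Omega\times(0,T))$ bounds, and pass to the limit by weak compactness, a.e.\ convergence and dominated convergence. The direct-method step, the truncation of the minimizing sequence to $[0,1]$, and the final limit passage are all as in the paper and are fine as you describe them.

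The genuine gap is in the only hard step, the uniform estimates, where you assert that one ``upgrades'' the bounds coming from $\mathcal{F}_\varepsilon(v_\varepsilon)\le C$ via ``a monotonicity-type control of the spatial energy'' without supplying the mechanism. Note that $\mathcal{F}_\varepsilon(v_\varepsilon)\le C$ alone only yields $\int_0^T\int_\Omega|\nabla v_\varepsilon|^2\,dx\,dt\lesssim \varepsilon e^{T/\varepsilon}$ and $\int_0^T\int_\Omega|\partial_t v_\varepsilon|^2\,dx\,dt\lesssim e^{T/\varepsilon}$, which are useless as $\varepsilon\to0$. The actual mechanism is an \emph{inner variation in time}: comparing $v_\varepsilon$ with $z_\delta(t)=v_\varepsilon(\phi_\delta(t))$, $\phi_\delta(t)=t-\delta\xi(t)$, stationarity at $\delta=0$ gives for a.e.\ $t$ the identity $E_\varepsilon(t)=R_\varepsilon(t)-I_\varepsilon(t)+\varepsilon Q_\varepsilon(t)$, where $E_\varepsilon$ is the forward-weighted energy, $I_\varepsilon,R_\varepsilon$ its kinetic and potential parts, and $Q_\varepsilon$ collects the new terms $d'(\tau)\cdot|\nabla v_\varepsilon|^2$, $\partial_t F$, $\partial_t\beta$, $\partial_t a_{ij}$ produced by the time dependence of the coefficients. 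Combined with $E_\varepsilon'=\tfrac1\varepsilon(E_\varepsilon-I_\varepsilon-R_\varepsilon)$ this yields $E_\varepsilon'=-\tfrac2\varepsilon I_\varepsilon+Q_\varepsilon$, and the hypotheses $|d_i'|\le D_3 d_i$, $|\partial_t F_i|\le H_1e^{H_2t}U_2$, $|\partial_t\beta|\le C_1e^{C_2t}U_3$, $|\partial_t a_{ij}|\le A_1e^{A_2t}U_1$ are exactly what is needed to get $|Q_\varepsilon|\le D_3E_\varepsilon+\tilde M_1e^{\tilde M_2 t}$, close a Gronwall inequality, and conclude $\int_0^T I_\varepsilon\,dt\le\tilde c(T)\varepsilon$, hence the $\varepsilon$-independent $H^1$ bound. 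This is precisely where the time dependence of the coefficients changes the proof of \cite{audsertil}, and it is the step your argument leaves blank. A second, smaller point: your ``truncate the data and remove the truncation at the end'' must be made precise for $d_i$, since the inner variation requires comparing $d$ at the reparametrized time $\psi_\delta(\tau)$ with $d(\tau)$ and hence a global two-sided bound $\rho(t)+\mu\le d_i\le D_4$; the paper first proves the estimates (with constants independent of $\mu,D_4$) under that extra assumption and then removes it by replacing $d$ with $d_m(t)=\frac{e^{-D_2t}}{e^{-D_2t}+1/m}\,d(t)+\frac1m$ and a diagonal limit in $m$. The remaining coefficients are not truncated at all: their exponential growth is absorbed by restricting to $\varepsilon<\bar{\varepsilon}=\frac{1}{2(A_2+H_2+C_2+D_2+1)}$.
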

\begin{theorem}[Limit segregation principle] \label{lsp}
Let us assume the conditions of \cref{hyponcoeff}. \\
Let us assume also that $D_2,H_2=0$ in \eqref{ip2} and \eqref{ip6} of \cref{hyponcoeff}. \\
Let us assume that there exists a function $b:[0,+\infty) \to \mathbb{R}^+$ such that 
\begin{equation*}
\beta(t,x) \ge b(t) \,\, \forall \, (t,x) \in [0,+\infty) \times \Omega,
\end{equation*}
with
\begin{equation*}
\lim_{t \to +\infty} b(t)=+\infty.
\end{equation*}
Let us assume that there exists $\bar{\mu}>0$ such that 
\begin{equation*}
a_{i,j}(t,x) \ge \bar{\mu} \,\, \forall \, i\ne j, \,\, \forall \, (t,x) \in [0,+\infty) \times \Omega.
\end{equation*}
Let $w=(w_1, \dots ,w_k)\in  \cap_{T>0} H^1(\Omega \times (0,T))^k$ be a weak solution, in the sense of $H^{-1}(\Omega \times (0,T))^k$ for all $T>0$, of 
\begin{equation*}
\begin{cases}
r_i(x)\partial_t w_{i} -d_i(t)\Delta w_{i}=f_i(t,x,w_{i})-\beta(t,x)w_{i}\sum_{j\ne i}a_{ij}(t,x)w^2_{j} \quad \forall i=1,\dots,k, \\
0 \le w_{i} \le 1 \quad \forall i=1,\dots,k.
\end{cases}
\end{equation*}
If there exists $\bar{v}=(\bar{v}_1,\dots \bar{v}_k) \in L^1_{loc}(\Omega)^k$ such that
\begin{equation*}
w(t) \xrightarrow[t \to +\infty]{} \bar{v} \quad \text{in} \, L^1_{loc}(\Omega),
\end{equation*}
then $\bar{v}$ is a segregated function, that is
\begin{equation*}
\bar{v}_i\bar{v}_j=0 \quad \text{a.e. in} \,\, \Omega, \quad \forall \, i \ne j.
\end{equation*}
\end{theorem}

\section{Existence of weak solutions}

To prove the existence of a weak solution of \eqref{gpg1}, we need to prove the existence, for $\varepsilon>0$ sufficiently small, of a minimizer for $\mathcal{F}_\varepsilon$. Then we want to obtain uniform estimates on $\varepsilon$ for these minimizers: so we can pass to the limit in $\varepsilon$, and find a solution for our reaction-diffusion system. \\

\begin{theorem}[Existence of minimizers] \label{eomn1}
Let us assume the conditions of \cref{hyponcoeff}, and \eqref{ip3}, \eqref{ip13}, \eqref{ip11}, \eqref{ip12}. 
Let $\mathcal{U}$ be $\mathcal{U}_{v_0,g}$ or $\mathcal{U}_{v_0}$, as in \eqref{ip9}, \eqref{ip10}. \\
Then, there exists $\bar{\varepsilon}>0$ such that, for every $\varepsilon \in (0,\bar{\varepsilon})$, there exists $v_\varepsilon=(v_{\varepsilon,1}, \dots , v_{\varepsilon,k}) \in \mathcal{U}$ minimizer for $\mathcal{F}_\varepsilon$ on $\mathcal{U}$, such that $0 \le v_{\varepsilon,i} \le 1$ a.e. in $\Omega \times \mathbb{R}^+$ for every $i=1, \dots, k$.
\end{theorem}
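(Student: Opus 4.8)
The plan is to use the direct method of the calculus of variations. First I would show that $\mathcal{F}_\varepsilon$ is not identically $+\infty$ on $\mathcal{U}$: the initial datum $v_0 \in H^1(\Omega)^k$ (and, in the case $\mathcal{U} = \mathcal{U}_{v_0,g}$, with trace $g$) can be extended to a competitor, e.g. by taking a fixed extension of $v_0$ multiplied by a cutoff in $t$, or simply $v(t,x) := v_0(x)$ if that lies in $\cap_{T>0} H^1(\Omega\times(0,T))^k$; the weight $e^{-t/\varepsilon}/\varepsilon$ together with the exponential growth bounds in \ref{ip1}, \ref{ip2}, \ref{ip5}, \ref{ip6} and the choice of $\bar\varepsilon$ in \ref{ip8} guarantees that all four terms are integrable for $\varepsilon < \bar\varepsilon$, so $\inf_{\mathcal{U}} \mathcal{F}_\varepsilon < +\infty$.

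Next I would establish coercivity and take a minimizing sequence $(v_n) \subset \mathcal{U}$. The delicate point is that the potential term $-2F(t,x,v)$ is only bounded (by $H_1 e^{H_2 t} U_2(x)$ on $\Theta$, hence on $[0,1]^k$), not sign-definite, so a priori a minimizing sequence need not be bounded in the weighted $H^1$ norm if its values stray far from $[0,1]$. The standard fix, and the key structural step, is a \emph{truncation lemma}: replacing each component $v_{n,i}$ by its truncation $\min(\max(v_{n,i},0),1)$ does not increase $\mathcal{F}_\varepsilon$. Indeed, truncation does not increase $|\partial_t v|^2$ or $|\nabla v|^2$ pointwise (standard for Sobolev truncations), does not increase the interaction term since $a_{ij} \ge 0$ and $s \mapsto s^2$ is monotone on $[0,1]$ after truncation, and does not increase $-2F(t,x,v)$ because, by \ref{ip2}/\ref{ip4}, $F_i(t,x,\cdot)$ is non-decreasing on $(-\infty,0]$ and non-increasing on $[1,+\infty)$, so truncating toward $[0,1]$ only increases $F_i$. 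One must check the truncated function still lies in $\mathcal{U}$: the initial condition $v_0$ already satisfies $0 \le v_0 \le 1$ by \ref{ip11} and the boundary datum $g$ is its trace, so truncation preserves both the initial and the boundary conditions. Hence we may assume from the start that the minimizing sequence satisfies $0 \le v_{n,i} \le 1$ a.e.

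With the minimizing sequence now uniformly in $[0,1]^k$, the potential and interaction terms are uniformly bounded in $L^1$ with the weight, so along the minimizing sequence the Dirichlet-type term $\int\!\!\int \frac{e^{-t/\varepsilon}}{\varepsilon}\{\varepsilon r|\partial_t v_n|^2 + d(t)|\nabla v_n|^2\}\,dx\,dt$ is bounded; combined with the $L^\infty$ bound and the lower bounds $r_i \ge R_1$, $d_i(t) \ge \rho(t)$ from \ref{ip7}, \ref{ip6}, this gives a uniform bound for $v_n$ in $H^1(\Omega\times(0,T))^k$ for every $T>0$ (the weight $e^{-t/\varepsilon}/\varepsilon$ is bounded below by a positive constant on each $[0,T]$, and $1/\rho$ is locally bounded). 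So up to a subsequence $v_n \rightharpoonup v_\varepsilon$ weakly in $H^1(\Omega\times(0,T))^k$ for each $T$ (diagonal argument) and, by compact Sobolev embedding, $v_n \to v_\varepsilon$ strongly in $L^2_{loc}$ and a.e.; the a.e. limit preserves $0 \le v_{\varepsilon,i} \le 1$, the initial trace, and the boundary trace, so $v_\varepsilon \in \mathcal{U}$. Finally, lower semicontinuity: the quadratic gradient/time-derivative terms are weakly lower semicontinuous (convexity), the interaction term $\frac{\beta}{2}\langle v^2, A v^2\rangle$ passes to the limit by a.e. convergence and dominated convergence (bounded integrand, integrable weighted majorant on $[0,1]^k$), and $-2F(t,x,v_n) \to -2F(t,x,v_\varepsilon)$ likewise by continuity of $F$ in $s$ and dominated convergence. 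On $[0,\infty)$ one uses that the tail $\int_T^\infty$ is uniformly small (again by the exponential weight beating the exponential growth, since $\varepsilon < \bar\varepsilon$) to combine the finite-$T$ lower semicontinuity with the tail control. Therefore $\mathcal{F}_\varepsilon(v_\varepsilon) \le \liminf_n \mathcal{F}_\varepsilon(v_n) = \inf_{\mathcal{U}} \mathcal{F}_\varepsilon$, so $v_\varepsilon$ is the desired minimizer.

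I expect the main obstacle to be the interplay between the infinite time horizon and the exponentially growing coefficients: making rigorous that the weighted tails vanish uniformly along the minimizing sequence (so that weak lower semicontinuity on each $[0,T]$ upgrades to the full functional) requires carefully tracking the constants $A_2, H_2, C_2, D_2$ against $1/\varepsilon$ via the definition of $\bar\varepsilon$ in \ref{ip8}. The truncation lemma is the other point needing care, but it is essentially the same monotonicity argument as in \cite{audsertil}, now with $x$- and $t$-dependent coefficients that do not affect the sign considerations.
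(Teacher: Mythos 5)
Your proposal is correct and follows essentially the same route as the paper: the direct method with $v_0$ as the finite-energy competitor, truncation of the minimizing sequence to $[0,1]$ via the monotonicity of $F_i$ outside $[0,1]$ and the sign of $a_{ij}$, uniform $H^1(\Omega\times(0,T))$ bounds from the weighted Dirichlet terms, weak sequential closedness of $\mathcal{U}$ by convexity, and lower semicontinuity term by term (convexity for the quadratic part, Fatou/dominated convergence for the interaction and $F$ terms). The only cosmetic difference is that the tail control you worry about is not really needed: the paper simply notes that the lower bound $-F_i(t,x,s)\ge -H_1e^{H_2t}U_2(x)$ holds for \emph{all} $s$ (so the functional is bounded below without truncation) and passes from $[0,T]$ to $[0,\infty)$ for the nonnegative quadratic terms by monotonicity in $T$.
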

\begin{proof}
First we notice that $\mathcal{U} \ne \emptyset$.
In fact the function $v_0(t,x)=v_0(x)$ belongs to $\mathcal{U}$: moreover $0 \le v_0 \le 1$. \\
We note that, for every $i=1,\dots,k$
$$\sup_{s \in \mathbb{R}} F_i(t,x,s)= \sup_{s \in [0,1]} F_i(t,x,s) \le \sup_{s \in [0,1]} |F_i(t,x,s)| \le H_1e^{H_2t}U_2(x) \quad \forall (t,x) \in [0,+\infty) \times \Omega,$$
and in particular
$$-F_i(t,x,s) \ge - \sup_{s \in [0,1]} |F_i(t,x,s)| \ge -H_1e^{H_2t}U_2(x)  \quad \forall (t,x) \in [0,+\infty) \times \Omega \times \mathbb{R}.$$
Let us define
\begin{equation} \label{ip8}
\bar{\varepsilon}:=\frac{1}{2(A_2+H_2+C_2+D_2+1)},
\end{equation}
and let 
\begin{equation} \label{ne11}
\varepsilon \in (0,\bar{\varepsilon})
\end{equation} be fixed. \\
By \eqref{ip8} and \eqref{ne11}, for every $v \in \mathcal{U}$
\begin{equation} \label{ne122}
\mathcal{F}_\varepsilon(v)\ge -2kH_1\int_0^{+\infty} \frac{e^{-\frac{t}{\varepsilon}}}{\varepsilon}e^{H_2t} \int_\Omega U_2(x) \, dxdt\ge -4kH_1||U_2||=-M_1>-\infty.
\end{equation}
Since $v_0$ is an element of $\mathcal{U}$, we have that
\begin{align} \label{ne123}
\notag \mathcal{F}_\varepsilon(v_0)&\le \int_0^{+\infty} \int_\Omega \frac{e^{-\frac{t}{\varepsilon}}}{\varepsilon} \{ \sum_{i=1}^k |d_i(t)| |\nabla v_{0,i}|^2 +2\sum_{i=1}^k \sup_{s \in[0,1]} |F_i(t,x,s)|+\frac{|\beta(t,x)|}{2}\sum_{i,j=1}^k a_{ij}\} \, dxdt  \\
\notag & \le \int_0^{+\infty} \frac{e^{-\frac{t}{\varepsilon}}}{\varepsilon} (D_1e^{D_2t}||\nabla v_0||^2_{L^2(\Omega)^k}+2kH_1||U_2||e^{H_2t}+\frac{k^2C_1A_1}{2}||U_1U_3||e^{(C_2+A_2)t}) \, dt \\
& \le 2(D_1||\nabla v_0||^2_{L^2(\Omega)^k}+2kH_1||U_2|| +\frac{k^2C_1A_1}{2}||U_1U_3||)=M_2 <+\infty.
\end{align}
So the infimum exists and it is finite. \\
Let $\set{u_n}_n \subset \mathcal{U}$ be a minimizing sequence for $\mathcal{F}_\varepsilon$.
Since $\mathcal{F}_\varepsilon(u_n) \le M_2$ and \\ $\int_0^{+\infty} \int_\Omega \frac{e^{-\frac{t}{\varepsilon}}}{\varepsilon} \{-2F(t,x,u_n)\} \,dxdt \ge -M_1$, we have that
$$\int_0^{+\infty}  \frac{e^{-\frac{t}{\varepsilon}}}{\varepsilon}\int_\Omega \varepsilon r(x) \cdot |\partial_tu_n|^2 \, dxdt + \int_0^{+\infty}  \frac{e^{-\frac{t}{\varepsilon}}}{\varepsilon} \int_\Omega d(t) \cdot |\nabla u_n|^2 \, dxdt  \le M_2+M_1. $$
By supposing that 
$$0 \le u_n \le 1 \quad \forall n \in \mathbb{N},$$
we notice that the sequence $\set{\max(0,\min(u_n,1))}_n$ is a minimizing sequence: indeed 
$$\mathcal{F}_\varepsilon(\max(0,\min(u_n,1)) \le \mathcal{F}_\varepsilon(u_n) \quad \forall n \in \mathbb{N}.$$
Let $T>0$ be fixed. \\
We call $M_3:=M_2+M_1$, and $\tilde{\rho}(T):=\frac{1}{\sup_{[0,T]} \frac{1}{\rho(t)}}>0$, where $\rho$ is defined in \eqref{ip6} of \cref{hyponcoeff}. \\
Observing that
\begin{align*}
&e^{-\frac{T}{\varepsilon}}\int_0^{T}  R_1 \int_\Omega |\partial_tu_n|^2 \, dxdt \le \int_0^{T}  \frac{e^{-\frac{t}{\varepsilon}}}{\varepsilon}\int_\Omega \varepsilon r(x) \cdot |\partial_tu_n|^2 \, dxdt  \le M_3, \\
&\frac{e^{-\frac{T}{\varepsilon}}}{\varepsilon}\tilde{\rho}(T)\int_0^{T} \int_\Omega  |\nabla u_n|^2 \, dxdt \le \int_0^{T}  \frac{e^{-\frac{t}{\varepsilon}}}{\varepsilon} \int_\Omega d(t) \cdot |\nabla u_n|^2 \, dxdt \le M_3,
\end{align*}
we have that
\begin{align*}
&\int_0^{T}  \int_\Omega |\partial_tu_n|^2 \, dxdt \le \frac{M_3}{R_1} e^\frac{T}{\varepsilon}, \\
&\int_0^{T} \int_\Omega  |\nabla u_n|^2 \, dxdt \le \frac{M_3\varepsilon e^\frac{T}{\varepsilon}}{\tilde{\rho}(T)}.
\end{align*}
By the fact that $0 \le u_n \le 1$ for all $n \in \mathbb{N}$, we obtain
$$\int_0^T\int_\Omega |u_n|^2 \, dxdt \le kT|\Omega|.$$
So there exist a converging subsequence (not relabeled), and a limit $\bar{u} \in \cap_{T>0} H^1(\Omega \times (0,T))^k$ such that, for $n \to +\infty$
$$u_n \xrightharpoonup[n \to +\infty]{} \bar{u} \quad \text{in} \,\, H^1(\Omega \times (0,T))^k \quad \forall \, T>0, \,\, \text{and a.e. in} \,\, \Omega \times \mathbb{R}^+.$$
Observing that the set $\mathcal{U}$ is strongly closed and convex, $\bar{u} \in \mathcal{U}$.
By the fact that $u_n \to \bar{u}$ a.e. in $\Omega \times \mathbb{R}^+$ and $0 \le u_n \le 1$ for all $n$, we have that $0 \le \bar{u} \le 1$ in $\Omega \times \mathbb{R}^+$. \\
Considering 
$$\mathcal{F}_\varepsilon(\bar{u})=\int_0^{+\infty} \int_\Omega \frac{e^{-\frac{t}{\varepsilon}}}{\varepsilon} \{ \varepsilon r(x) \cdot |\partial_t \bar{u}|^2 + d(t) \cdot |\nabla \bar{u}|^2 -2F(t,x,\bar{u}) + \frac{\beta(t,x)}{2} \braket{\bar{u}^2,A(t,x)\bar{u}^2} \} \, dxdt,$$
we get, by the Fatou Lemma:
\begin{equation*}
\int_0^{+\infty} \int_\Omega \frac{e^{-\frac{t}{\varepsilon}}}{\varepsilon} \frac{\beta(t,x)}{2} \braket{\bar{u}^2,A(t,x)\bar{u}^2} \, dxdt \le \liminf_{n \to +\infty} \int_0^{+\infty} \int_\Omega \frac{e^{-\frac{t}{\varepsilon}}}{\varepsilon} \frac{\beta(t,x)}{2} \braket{u^2_n,A(t,x)u^2_n} \, dxdt,
\end{equation*}
and by the dominate convergence:
\begin{equation*}
\int_0^{+\infty} \int_\Omega \frac{e^{-\frac{t}{\varepsilon}}}{\varepsilon} \{ -2F(t,x,\bar{u}) \} \, dxdt=\lim_{n\to +\infty} \int_0^{+\infty} \int_\Omega  \frac{e^{-\frac{t}{\varepsilon}}}{\varepsilon} \{ -2F(t,x,u_n) \} \, dxdt.
\end{equation*}
By the convexity and the weak convergence we have that, for all $T>0$
\begin{align*}
&\int_0^{T} \int_\Omega \frac{e^{-\frac{t}{\varepsilon}}}{\varepsilon} \{ \varepsilon r(x) \cdot |\partial_t \bar{u}|^2 +d(t) \cdot |\nabla \bar{u}|^2 \} \, dxdt \\
& \le \liminf_{n \to +\infty} \int_0^{T} \int_\Omega \frac{e^{-\frac{t}{\varepsilon}}}{\varepsilon} \{ \varepsilon r(x) \cdot |\partial_t u_n|^2 + d(t) \cdot |\nabla u_n|^2 \} \, dxdt \\
& \le  \liminf_{n \to +\infty} \int_0^{+\infty} \int_\Omega \frac{e^{-\frac{t}{\varepsilon}}}{\varepsilon} \{ \varepsilon r(x) \cdot |\partial_t u_n|^2 +d(t) \cdot |\nabla u_n|^2 \} \, dxdt,
\end{align*}
and since it holds for every $T>0$, we obtain
\begin{align*}
&\int_0^{+\infty} \int_\Omega \frac{e^{-\frac{t}{\varepsilon}}}{\varepsilon} \{ \varepsilon r(x) \cdot |\partial_t \bar{u}|^2 +d(t) \cdot |\nabla \bar{u}|^2 \} \, dxdt \\
&\le \liminf_{n \to +\infty} \int_0^{+\infty} \int_\Omega \frac{e^{-\frac{t}{\varepsilon}}}{\varepsilon} \{ \varepsilon r(x) \cdot |\partial_t u_n|^2 +d(t) \cdot  |\nabla u_n|^2 \} \, dxdt.
\end{align*}
So
$$\mathcal{F}_\varepsilon(\bar{u}) \le \liminf_{n \to+\infty} \mathcal{F}_\varepsilon(u_n)=\inf_\mathcal{U} \mathcal{F}_\varepsilon.$$
Hence $\bar{u}=v_\varepsilon$, getting the thesis.
\end{proof}
\begin{corollary} \label{appeq}
Let us assume the conditions of \cref{hyponcoeff}, and \eqref{ip3}, \eqref{ip13}, \eqref{ip11}, \eqref{ip12}. 
Let $\mathcal{U}$ be $\mathcal{U}_{v_0,g}$ or $\mathcal{U}_{v_0}$ like in \eqref{ip9}, \eqref{ip10}. 
Let $\bar{\varepsilon}$ be as in \eqref{ip8}. \\
For every $\varepsilon \in (0,\bar{\varepsilon})$, the minimizer $v_\varepsilon$ is a weak solution, in the sense of $H^{-1}(\Omega \times (0,T))^k$ for all $T>0$ (and, in particular, in the sense of distribution of $\Omega \times \mathbb{R}^+$), of:
\begin{equation} \label{equaappr1}
\begin{cases}
-\varepsilon r_i(x)\partial_{tt}v_{\varepsilon,i} +r_i(x)\partial_t v_{\varepsilon,i} -d_i(t)\Delta v_{\varepsilon,i}=f_i(t,x,v_{\varepsilon,i})-\beta(t,x)v_{\varepsilon,i}\sum_{j\ne i} a_{ij}(t,x)v^2_{\varepsilon,j} \,\, \forall i=1,\dots,k, \\
v_{\varepsilon,i}(0,x)=v_{0,i}(x) \quad \text{in} \,\, \Omega, \quad \forall i=1, \dots,k, \\
v_{\varepsilon,i}=g_i \quad \text{on} \,\, \partial \Omega, \quad \forall i=1,\dots,k, \\
0 \le v_{\varepsilon,i} \le 1 \quad \forall i=1,\dots,k,
\end{cases}
\end{equation}
if $\mathcal{U}=\mathcal{U}_{v_0,g}$, or of
\begin{equation} \label{equaappr2}
\begin{cases}
-\varepsilon r_i(x)\partial_{tt}v_{\varepsilon,i} +r_i(x)\partial_t v_{\varepsilon,i} -d_i(t)\Delta v_{\varepsilon,i}=f_i(t,x,v_{\varepsilon,i})-\beta(t,x)v_{\varepsilon,i}\sum_{j \ne i} a_{ij}(t,x)v^2_{\varepsilon,j} \,\, \forall i=1,\dots,k, \\
v_{\varepsilon,i}(0,x)=v_{0,i}(x) \quad \text{in} \,\, \Omega, \quad \forall i=1, \dots,k, \\
0 \le v_{\varepsilon,i} \le 1 \quad \forall i=1,\dots,k,
\end{cases}
\end{equation}
if $\mathcal{U}=\mathcal{U}_{v_0}$.
\end{corollary}
\begin{proof}
Let $\varepsilon \in (0,\bar{\varepsilon})$ be fixed. \\
For every $i=1,\dots k$ fixed, we consider $\varphi_i \in C^\infty_c(\Omega \times \mathbb{R}^+)$, and $\phi=(\phi_1,\dots ,\phi_k) \in C^\infty_c(\Omega \times \mathbb{R}^+)^k$ such that $\phi_i=\varphi_i$, and $\phi_j=0$ for all $j \ne i$. \\
Since $v_\varepsilon$ is a minimizer of $\mathcal{F}_\varepsilon$ in $\mathcal{U}$ we get, by dominate convergence, that
\begin{align*}
0 &= \lim_{h \to 0} \frac{\mathcal{F}_\varepsilon(v_\varepsilon +h\phi)-\mathcal{F}_\varepsilon(v_\varepsilon)}{h} \\
&=\lim_{h \to 0} \frac{1}{h} \int_0^{+\infty} \int_\Omega \frac{e^{-\frac{t}{\varepsilon}}}{\varepsilon} \{ 2h\varepsilon r_i(x)\partial_t v_{\varepsilon,i} \partial_t \varphi_i +h^2\varepsilon r_i(x)|\partial_t\varphi_i|^2+ 2hd_i(t)\nabla v_{\varepsilon,i}\cdot \nabla \varphi_i +h^2d_i(t)|\nabla \varphi_i|^2  \\
&\qquad \qquad \qquad \qquad \qquad -2(F_i(t,x,v_{\varepsilon,i}+h\varphi_i)-F_i(t,x,v_{\varepsilon,i})) \\
&\qquad \qquad \qquad \qquad \qquad +2h\beta(t,x)v_{\varepsilon,i}\sum_{j \ne i} a_{ij}(t,x)v^2_{\varepsilon,j}\varphi_i+h^2\beta(t,x)\sum_{j \ne i}a_{ij}(t,x)v^2_{\varepsilon,j}\varphi^2_i \} \, dxdt \\
&=2\int_0^{+\infty} \int_\Omega \frac{e^{-\frac{t}{\varepsilon}}}{\varepsilon} \{ \varepsilon  r_i(x)\partial_t v_{\varepsilon,i} \partial_t \varphi_i+d_i(t) \nabla v_{\varepsilon,i}\cdot \nabla \varphi_i -f_i(t,x,v_{\varepsilon,i})\varphi_i \\
& \qquad \qquad \qquad \qquad +\beta(t,x)v_{\varepsilon,i}\sum_{j \ne i} a_{ij}(t,x)v^2_{\varepsilon,j}\varphi_i \} \,dxdt.
\end{align*}
Choosing $\varphi_i=\varepsilon e^\frac{t}{\varepsilon}\varphi$, where $\varphi \in C^\infty_c(\Omega \times \mathbb{R}^+)$ is arbitrary, we have that
\begin{align*}
&\int_0^{+\infty} \int_\Omega \varepsilon r_i(x)\partial_t v_{\varepsilon,i} \partial_t \varphi+ r_i(x)\partial_t v_{\varepsilon,i}\varphi+d_i(t)\nabla v_{\varepsilon,i}\cdot \nabla \varphi -f_i(t,x,v_{\varepsilon,i})\varphi \\
&\qquad \qquad \qquad \qquad \qquad \qquad \qquad \qquad +\beta(t,x)v_{\varepsilon,i}\sum_{j\ne i} a_{ij}(t,x)v^2_{\varepsilon,j}\varphi \, dxdt=0.
\end{align*}
For every $T>0$ fixed, by the density of $C^\infty_c(\Omega \times (0,T))$ in $H^1_0(\Omega \times (0,T))$ with the $H^1$-norm, we have that the previous equation is also true for every $\varphi \in H^1_0(\Omega \times (0,T))$, for every $T>0$.
Thus $v_{\varepsilon,i}$ is a weak solution, getting the thesis.
\end{proof}
Now we want uniform estimates to pass to the limit for $\varepsilon \to 0$.
\begin{corollary}
Let us assume the conditions of \cref{hyponcoeff}, and \eqref{ip3}, \eqref{ip13}, \eqref{ip11}, \eqref{ip12}. 
Let $\bar{\varepsilon}$ be as in \eqref{ip8}. \\
For every $\varepsilon \in (0,\bar{\varepsilon})$, let $v_\varepsilon$ be a minimizer of $\mathcal{F}_\varepsilon$.
Then there exists a constant $C>0$ independent of $\varepsilon$ such that
\begin{equation*}
|\mathcal{F}_\varepsilon(v_\varepsilon)| \le C.
\end{equation*}
\end{corollary}
\begin{proof}
By \eqref{ne122} and \eqref{ne123}, we know that
$$\mathcal{F}_\varepsilon(v) \ge -M_1 \quad \forall v \in \mathcal{U}, \quad \quad  \mathcal{F}_\varepsilon(v_\varepsilon) \le \mathcal{F}_\varepsilon(v_0) \le M_2.$$
Taking $C=M_1+M_2$ we get the thesis.
\end{proof}
We define the following quantities, for a.e. $t \in \mathbb{R}^+$:
\begin{align*}
&R_\varepsilon(t):=\int_\Omega d(t) \cdot |\nabla v_\varepsilon(t)|^2 -2F(t,x,v_\varepsilon(t)) +\frac{\beta(t,x)}{2} \braket{v^2_\varepsilon(t),A(t,x)v^2_\varepsilon(t)} \, dx; \\
&I_\varepsilon(t):=\varepsilon \int_\Omega r(x) \cdot |\partial_t v_\varepsilon(t)|^2 \, dx,
\end{align*}
thus
\begin{equation*}
\mathcal{F}_\varepsilon(v_\varepsilon)=\int_0^{+\infty} \frac{e^{-\frac{t}{\varepsilon}}}{\varepsilon} \{I_\varepsilon(t)+R_\varepsilon(t) \} \, dt.
\end{equation*}
Moreover, we define the energy
\begin{equation*}
E_\varepsilon(t):=e^\frac{t}{\varepsilon}\int_t^{+\infty} \frac{e^{-\frac{\tau}{\varepsilon}}}{\varepsilon} \{I_\varepsilon(\tau)+R_\varepsilon(\tau) \} \, d\tau,
\end{equation*}
and we have that
\begin{equation} \label{enidn1}
\begin{cases}
E'_\varepsilon(t)=\frac{1}{\varepsilon}(-I_\varepsilon(t)-R_\varepsilon(t)+E_\varepsilon(t)) \\
E_\varepsilon(0)=\mathcal{F}_\varepsilon(v_\varepsilon).
\end{cases}
\end{equation}

\begin{theorem}[Uniform estimates] \label{uniestimnn1}
Let us suppose the conditions of \cref{hyponcoeff}, and \eqref{ip3}, \eqref{ip13}, \eqref{ip11}, \eqref{ip12}. Let $\bar{\varepsilon}$ be as in \eqref{ip8}. Then, for every $\varepsilon \in (0,\bar{\varepsilon})$ there exists a function $\bar{v}_\varepsilon\in \mathcal U$, which is a weak solution of \eqref{equaappr1} or \eqref{equaappr2} and is such that, for every $T>0$, there exists a constant $\check{C}(T)$ that is independent of $\rho(t)$ and $\varepsilon$ such that:
\begin{align*}
&\int_0^T \int_\Omega |\partial_t \bar{v}_\varepsilon|^2 \, dxdt \le \check{C}(T); \\ 
&\int_0^T \int_\Omega |\nabla \bar{v}_\varepsilon|^2 \, dxdt \le \frac{\check{C}(T)}{(1-\frac{1}{e^{-D_2T}+1})\tilde{\rho}(T)}; \\ 
&\int_0^T \int_\Omega |\bar{v}_\varepsilon|^2 \, dxdt \le \check{C}(T). 
\end{align*}
where, for every $T>0$, we set 
\begin{equation}\label{e:definition-rho-tilde}
\tilde{\rho}(T):=\inf_{t\in[0,T]} \rho(t)>0.
\end{equation}
\end{theorem}
\begin{proof}
First, in Step 1, we prove the theorem under an additional hypothesis on the growth of the coefficients $d_i(t)$. Then, in Step 2, we use an approximation argument to prove the theorem in the general case.\medskip 

\noindent {\bf Step 1:}
First we suppose that there exist $\mu>0$ and $D_4>0$ such that
\begin{equation}\label{e:step1-hypo-ag}
\rho(t)+\mu \le d_i(t) \le D_4\quad\text{for every}\quad i=1,\dots,k. 
\end{equation}
Then, for every $T>0$, there is a constant $\bar{C}(T)$, that does not depend on $D_4$ and $\mu$, such that:
\begin{align}
&\int_0^T \int_\Omega |\partial_t v_\varepsilon|^2 \, dxdt \le \bar{C}(T);\label{e:step1-conclusion-1} \\
&\int_0^T \int_\Omega |\nabla v_\varepsilon|^2 \, dxdt \le \frac{\bar{C}(T)}{\tilde{\rho}(T)+\mu};\label{e:step1-conclusion-2} \\
&\int_0^T \int_\Omega |v_\varepsilon|^2 \, dxdt \le \bar{C}(T).\label{e:step1-conclusion-3}
\end{align}
In order to prove Step 1, we will use an inner variation argument as in \cite{audsertil}. \\
Let $\eta \in C^\infty_c(\mathbb{R}^+)$, and let
$$\xi(t):=\int_0^t \eta(\tau) d\tau, \quad \quad \phi_\delta(t):=t-\delta \xi(t),$$
with $|\delta|$ small enough such that $\phi'>0$ in $\mathbb{R}^+$. We observe that $\phi_\delta(0)=0$. \\
So we can take the inverse $\psi_\delta:=\phi^{-1}_\delta$ that satisfies, for every $\tau \in \mathbb{R}^+$:
$$\psi_\delta(\tau)=\tau+\delta \xi(\psi_\delta(\tau)).$$
Let us introduce the competitor $z_\delta(t):=v_\varepsilon(\phi_\delta(t))$. Hence,
\begin{align*}
\mathcal{F}_\varepsilon(z_\delta)&=\int_0^{+\infty} \int_\Omega \frac{e^{-\frac{t}{\varepsilon}}}{\varepsilon} \{ \varepsilon r(x) \cdot|\partial_tv_\varepsilon(\phi_\delta(t))|^2|\phi'_\delta(t)|^2 +d(t) \cdot |\nabla v_\varepsilon(\phi_\delta(t))|^2 \\
&\qquad \qquad \qquad \quad-2F(t,x,v_\varepsilon(\phi_\delta(t))) +\frac{\beta(t,x)}{2}\braket{v^2_\varepsilon(\phi_\delta(t)),A(t,x)v^2_\varepsilon(\phi_\delta(t))} \} \, dxdt \\
&=\int_0^{+\infty} \int_\Omega \frac{e^{-\frac{\psi_\delta(\tau)}{\varepsilon}}}{\varepsilon}\psi'_\delta(\tau) \{ \varepsilon r(x) \cdot |\partial_tv_\varepsilon(\tau)|^2|\phi'_\delta(\psi_\delta(\tau))|^2 +d(\psi_\delta(\tau)) \cdot |\nabla v_\varepsilon(\tau)|^2 \\
& \qquad \qquad \qquad \qquad \qquad \quad -2F(\psi_\delta(\tau),x,v_\varepsilon(\tau)) \\
& \qquad \qquad \qquad \qquad \qquad \quad +\frac{\beta(\psi_\delta(\tau),x)}{2}\braket{v^2_\varepsilon(\tau),A(\psi_\delta(\tau),x)v^2_\varepsilon(\tau)} \} \, dxd\tau.
\end{align*}
This is finite for all $|\delta| \le \min \{\frac{1}{2||\eta||_{L^\infty}},1\}$: defining
\begin{align*}
P_\delta(\tau):=&\varepsilon r(x) \cdot |\partial_tv_\varepsilon(\tau)|^2|\phi'_\delta(\psi_\delta(\tau))|^2 +d(\psi_\delta(\tau)) \cdot |\nabla v_\varepsilon(\tau)|^2 -2F(\psi_\delta(\tau),x,v_\varepsilon(\tau)) \\
&+\frac{\beta(\psi_\delta(\tau),x)}{2}\braket{v^2_\varepsilon(\tau),A(\psi_\delta(\tau),x)v^2_\varepsilon(\tau)},
\end{align*}
we get
\begin{align*}
\mathcal{F}_\varepsilon(z_\delta)&=\int_0^{+\infty} \int_\Omega \frac{e^{-\frac{\psi_\delta(\tau)}{\varepsilon}}}{\varepsilon}\psi'_\delta(\tau) P_\delta(\tau) \, dxd\tau \\
&\le \int_0^{+\infty} \int_\Omega 2\frac{e^{-\frac{\tau}{\varepsilon}}}{\varepsilon}e^\frac{ ||\xi||_{L^\infty}}{\varepsilon} \{4\varepsilon r(x) \cdot |\partial_t v_\varepsilon(\tau)|^2 +\frac{D_4}{\mu}d(\tau) \cdot |\nabla v_\varepsilon(\tau)|^2 \} \,dxd\tau \\
&+4kH_1e^{\frac{||\xi||_{L^\infty}}{\varepsilon}+H_2||\xi||_{L^\infty}}||U_2||\int_0^{+\infty} \frac{e^\frac{-(1-\varepsilon H_2)\tau}{\varepsilon}}{\varepsilon} \, d\tau \\
& +2k^2C_1A_1 e^{\frac{||\xi||_{L^\infty}}{\varepsilon}+(C_2+A_2)||\xi||_{L^\infty}}||U_3U_1|| \int_0^{+\infty} \frac{e^\frac{-(1-\varepsilon (C_2+A_2))\tau}{\varepsilon}}{\varepsilon} \, d\tau  \\
& \le 2e^\frac{||\xi||_{L^\infty}}{\varepsilon}(4+\frac{D_4}{\mu})\mathcal{F}_\varepsilon(v_\varepsilon)+2e^\frac{||\xi||_{L^\infty}}{\varepsilon}(4+\frac{D_4}{\mu})2kH_1||U_2||2 +8kH_1e^{\frac{||\xi||_{L^\infty}}{\varepsilon}+H_2||\xi||_{L^\infty}}||U_2|| \\
&+4k^2C_1A_1 e^{\frac{||\xi||_{L^\infty}}{\varepsilon}+(C_2+A_2)||\xi||_{L^\infty}}||U_3U_1|| <+\infty.
\end{align*}
Hence, for all $\varepsilon \in (0,\bar{\varepsilon})$, and for all $|\delta| \le \min \{\frac{1}{2||\eta||_{L^\infty}},1\}$, we get
$$\frac{e^{-\frac{\psi_\delta(\tau)}{\varepsilon}}}{\varepsilon}\psi'_\delta(\tau) P_\delta(\tau) \in L^1(\Omega \times \mathbb{R}^+).$$
Moreover, we have shown that there is an $L^1$ function $\delta$-independent that controls \\ $|\frac{e^{-\frac{\psi_\delta(\tau)}{\varepsilon}}}{\varepsilon}\psi'_\delta(\tau) P_\delta(\tau)|$. \\
Now we want to compute the derivative with respect to $\delta$, and using the minimality of $v_\varepsilon$, i.e.
$$\frac{d}{d\delta} \mathcal{F}_\varepsilon(z_\delta)|_{\delta=0}=0.$$
First we compute
\begin{align*}
&\frac{d}{d\delta}\biggl( \frac{e^{-\frac{\psi_\delta(\tau)}{\varepsilon}}}{\varepsilon}\psi'_\delta(\tau) P_\delta(\tau)\biggr)=-\frac{1}{\varepsilon} \frac{d}{d\delta}\psi_\delta(\tau)\frac{e^{-\frac{\psi_\delta(\tau)}{\varepsilon}}}{\varepsilon}\psi'_\delta(\tau) P_\delta(\tau)+\bigl(\frac{d}{d\delta}\psi'_\delta(\tau)\bigr)\frac{e^{-\frac{\psi_\delta(\tau)}{\varepsilon}}}{\varepsilon}P_\delta(\tau) \\
&+\frac{e^{-\frac{\psi_\delta(\tau)}{\varepsilon}}}{\varepsilon}\psi'_\delta(\tau) \{ \varepsilon r(x) \cdot |\partial_tv_\varepsilon(\tau)|^2(2\phi'_\delta(\psi_\delta(\tau))(\frac{d}{d\delta}\phi'_\delta(\psi_\delta(\tau))))+d'(\psi_\delta(\tau)) \cdot |\nabla v_\varepsilon(\tau)|^2 \frac{d}{d\delta}\psi_\delta(\tau) \\
& \qquad \qquad \qquad \quad -2\partial_tF(\psi_\delta(\tau),x,v_\varepsilon(\tau))\frac{d}{d\delta}\psi_\delta(\tau) +\frac{\partial_t\beta(\psi_\delta(\tau),x)}{2}\frac{d}{d\delta}\psi_\delta(\tau)\braket{v^2_\varepsilon(\tau),A(\psi_\delta(\tau),x)v^2_\varepsilon(\tau)}  \\
& \qquad \qquad \qquad \quad +\frac{\beta(\psi_\delta(\tau),x)}{2}(\sum_{i,j=1}^k \partial_t a_{ij}(\psi_\delta(\tau),x) v^2_{\varepsilon,i}(\tau)v^2_{\varepsilon,j}(\tau)) \frac{d}{d\delta}\psi_\delta(\tau)\}.
\end{align*}
Similarly as above, using $|\delta| \le \min \{\frac{1}{2||\eta||_{L^\infty}},1\}$, we get
$$\frac{d}{d\delta}\biggl(\frac{e^{-\frac{\psi_\delta(\tau)}{\varepsilon}}}{\varepsilon}\psi'_\delta(\tau) P_\delta(\tau)\biggr) \in L^1(\Omega \times \mathbb{R}^+).$$
Moreover, as above, there is an $L^1$ function $\delta$-independent that controls $|\frac{d}{d\delta}\bigl(\frac{e^{-\frac{\psi_\delta(\tau)}{\varepsilon}}}{\varepsilon}\psi'_\delta(\tau) P_\delta(\tau)\bigr)|$. \\
Then, we calculate
\begin{align*}
0&=\frac{d}{d\delta} \mathcal{F}_\varepsilon(z_\delta)|_{\delta=0}=\int_0^{+\infty} \int_\Omega \frac{d}{d\delta}(\frac{e^{-\frac{\psi_\delta(\tau)}{\varepsilon}}}{\varepsilon}\psi'_\delta(\tau) P_\delta(\tau))|_{\delta=0} \, dxd\tau \\
&=\int_0^{+\infty} \int_\Omega \frac{e^{-\frac{\tau}{\varepsilon}}}{\varepsilon} (\xi'(\tau)-\frac{\xi(\tau)}{\varepsilon})\{\varepsilon r(x) \cdot |\partial_tv_\varepsilon(\tau)|^2+d(\tau) \cdot |\nabla v_\varepsilon(\tau)|^2 -2F(\tau,x,v_\varepsilon(\tau)) \\ 
& \qquad \qquad \qquad \qquad \qquad \qquad \qquad +\frac{\beta(\tau,x)}{2}\braket{v^2_\varepsilon(\tau),A(\tau,x)v^2_\varepsilon(\tau)} \} \, dxd\tau  \\
&+ \int_0^{+\infty} \int_\Omega \frac{e^{-\frac{\tau}{\varepsilon}}}{\varepsilon} \{ -2\xi'(\tau) \varepsilon r(x) \cdot |\partial_t v_\varepsilon(\tau)|^2+\xi(\tau)d'(\tau)\cdot |\nabla v_\varepsilon(\tau)|^2-2\xi(\tau)\partial_tF(\tau,x,v_\varepsilon(\tau)) \\
&\qquad \qquad \qquad \quad + \xi(\tau)\frac{\partial_t \beta(\tau,x)}{2} \braket{v^2_\varepsilon(\tau),A(\tau,x)v^2_\varepsilon(\tau)} \\
&\qquad \qquad \qquad \quad +\xi(\tau)\frac{\beta(\tau,x)}{2} \sum_{i,j=1}^k\partial_t a_{ij}(\tau,x) v^2_{\varepsilon,i}(\tau)v^2_{\varepsilon,j}(\tau) \} \, dxd\tau.
\end{align*}
Now we choose, for a.e. fixed $t>0$, the function
\begin{equation*}
\xi(\tau)=\begin{cases}
0 \quad \text{if} \,\, \tau \le t; \\
\varepsilon \frac{e^\frac{t}{\varepsilon}(\tau-t)}{\lambda} \quad \text{if} \,\, t<\tau<t+\lambda; \\
\varepsilon e^\frac{t}{\varepsilon} \quad \text{if} \,\, \tau \ge t+\lambda.
\end{cases}
\end{equation*}
Defining:
\begin{align*}
Q_\varepsilon(t)&:=e^\frac{t}{\varepsilon}\int_t^{+\infty} \int_\Omega \frac{e^{-\frac{\tau}{\varepsilon}}}{\varepsilon}\{d'(\tau) \cdot |\nabla v_\varepsilon(\tau)|^2-2\partial_t F(\tau,x,v_\varepsilon(\tau))+ \frac{\partial_t\beta(\tau,x)}{2}\braket{v^2_\varepsilon(\tau),A(\tau,x)v^2_\varepsilon(\tau)}\\ 
&\qquad \qquad \qquad \qquad \quad +\frac{\beta(\tau,x)}{2} \sum_{i,j=1}^k \partial_t a_{ij}(\tau,x)v^2_{\varepsilon,i}(\tau)v^2_{\varepsilon,j}(\tau) \} \,dxd\tau,
\end{align*}
for $\lambda \to 0^+$ we have that
\begin{align*}
0&=\int_\Omega \varepsilon r(x) \cdot |\partial_tv_\varepsilon(t)|^2+d(t) \cdot |\nabla v_\varepsilon(t)|^2 -2F(t,x,v_\varepsilon(t)) +\frac{\beta(t,x)}{2}\braket{v^2_\varepsilon(t),A(t,x)v^2_\varepsilon(t)} \, dx \\
&-2\int_\Omega \varepsilon r(x) \cdot |\partial_t v_\varepsilon(t)|^2 \, dx-e^\frac{t}{\varepsilon}\int_t^{+\infty} \int_\Omega \frac{e^{-\frac{\tau}{\varepsilon}}}{\varepsilon} \{\varepsilon r(x) \cdot |\partial_tv_\varepsilon(\tau)|^2+d(\tau) \cdot |\nabla v_\varepsilon(\tau)|^2 -2F(\tau,x,v_\varepsilon(\tau))  \\
&\qquad \qquad \qquad \qquad \qquad \qquad \qquad \qquad \qquad \qquad +\frac{\beta(\tau,x)}{2}\braket{v^2_\varepsilon(\tau),A(\tau,x)v^2_\varepsilon(\tau)} \} \, dxd\tau \\
&+\varepsilon e^\frac{t}{\varepsilon}\int_t^{+\infty} \int_\Omega \frac{e^{-\frac{\tau}{\varepsilon}}}{\varepsilon}\{d'(\tau) \cdot |\nabla v_\varepsilon(\tau)|^2-2\partial_t F(\tau,x,v_\varepsilon(\tau))+ \frac{\partial_t\beta(\tau,x)}{2}\braket{v^2_\varepsilon(\tau),A(\tau,x)v^2_\varepsilon(\tau)} \\
&\qquad \qquad \qquad \qquad \quad +\frac{\beta(\tau,x)}{2} \sum_{i,j=1}^k \partial_t a_{ij}(\tau,x)v^2_{\varepsilon,i}(\tau)v^2_{\varepsilon,j}(\tau) \} \,dxd\tau \\
&=I_\varepsilon(t)+R_\varepsilon(t)-2I_\varepsilon(t)-E_\varepsilon(t)+\varepsilon Q_\varepsilon(t),
\end{align*}
that is
\begin{equation} \label{nien1}
E_\varepsilon(t)=R_\varepsilon(t)-I_\varepsilon(t)+\varepsilon Q_\varepsilon(t).
\end{equation}
Recalling \eqref{enidn1}, we have
$$E'_\varepsilon(t)=\frac{1}{\varepsilon}(-2I_\varepsilon(t)+\varepsilon Q_\varepsilon(t))=-\frac{2}{\varepsilon}I_\varepsilon(t)+Q_\varepsilon(t).$$
Thanks to the conditions of \cref{hyponcoeff},
\begin{align*}
|Q_\varepsilon(t)|&\le e^\frac{t}{\varepsilon}\int_t^{+\infty} \int_\Omega \frac{e^{-\frac{\tau}{\varepsilon}}}{\varepsilon}\{|d'(\tau)| \cdot |\nabla v_\varepsilon(\tau)|^2+2|\partial_t F(\tau,x,v_\varepsilon(\tau))|+ \frac{|\partial_t\beta(\tau,x)|}{2}\braket{v^2_\varepsilon(\tau),A(\tau,x)v^2_\varepsilon(\tau)}\\ 
&\qquad \qquad \qquad \qquad \quad +\frac{\beta(\tau,x)}{2} \sum_{i,j=1}^k |\partial_t a_{ij}(\tau,x)|v^2_{\varepsilon,i}(\tau)v^2_{\varepsilon,j}(\tau) \} \,dxd\tau  \\
& \le D_3E_\varepsilon(t)+D_3e^\frac{t}{\varepsilon} \int_t^{+\infty}\int_\Omega \frac{e^{-\frac{\tau}{\varepsilon}}}{\varepsilon} 2|F(\tau,x,v_\varepsilon(\tau))|  \, dxd\tau \\
&+e^\frac{t}{\varepsilon}\int_t^{+\infty} \int_\Omega \frac{e^{-\frac{\tau}{\varepsilon}}}{\varepsilon}2|\partial_t F(\tau,x,v_\varepsilon(\tau))| \, dxd\tau \\
&+ e^\frac{t}{\varepsilon}\int_t^{+\infty} \int_\Omega \frac{e^{-\frac{\tau}{\varepsilon}}}{\varepsilon}\{\frac{|\partial_t\beta(\tau,x)|}{2}\braket{v^2_\varepsilon(\tau),A(\tau,x)v^2_\varepsilon(\tau)} \\
& \qquad \qquad \qquad \qquad+\frac{\beta(\tau,x)}{2} \sum_{i,j=1}^k |\partial_t a_{ij}(\tau,x)|v^2_{\varepsilon,i}(\tau)v^2_{\varepsilon,j}(\tau) \} \,dxd\tau  \\
& \le D_3E_\varepsilon(t) +4(D_3+1)kH_1||U_2||e^{H_2t}+2k^2C_1A_1||U_3U_1||e^{(C_2+A_2)t}.
\end{align*}
Defining
\begin{align*}
&\tilde{M}_1=\tilde{M}_1(k,A_1,C_1,H_1,D_3,U_1,U_2,U_3):= 4(D_3+1)kH_1||U_2||+2k^2C_1A_1||U_3U_1||; \\
& \tilde{M}_2=\tilde{M}_2(A_2,C_2,H_2):= A_2+C_2+H_2+1,
\end{align*}
we obtain that
$$|Q_\varepsilon(t)| \le D_3 E_\varepsilon(t) +\tilde{M}_1e^{\tilde{M}_2t}.$$
Thus $E_\varepsilon$ solves
\begin{equation*}
\begin{cases}
E'_\varepsilon(t) \le -\frac{2}{\varepsilon}I_\varepsilon(t)+\tilde{M}_1e^{\tilde{M}_2t}+D_3E_\varepsilon(t); \\
E_\varepsilon(0)=\mathcal{F}_\varepsilon(v_\varepsilon) \le C.
\end{cases}
\end{equation*}
Observing that
\begin{align*}
E_\varepsilon(t) &\ge -2e^\frac{t}{\varepsilon} \int_t^{+\infty} \int_\Omega \frac{e^{-\frac{\tau}{\varepsilon}}}{\varepsilon} |F(\tau,x,v_\varepsilon(\tau))| \, dxd\tau  \\
& \ge -4kH_1||U_2||e^{H_2t} \ge -\tilde{M}_1e^{\tilde{M}_2t},
\end{align*}
we have, by Gronwall inequality
\begin{equation} \label{groneid1}
-\tilde{M}_1e^{\tilde{M}_2T} \le E_\varepsilon(T) \le e^{D_3T}\biggl(E_\varepsilon(0)+\int_0^T e^{-D_3t} \tilde{M}_1e^{\tilde{M}_2t} \, dt -\frac{2}{\varepsilon}\int_0^T e^{-D_3t}I_\varepsilon(t) \, dt\biggr).
\end{equation}
Hence
$$-\tilde{M}_1e^{\tilde{M}_2T} \le Ce^{D_3T}+\frac{\tilde{M}_1}{\tilde{M}_2}e^{(\tilde{M}_2+D_3)T}-\frac{2}{\varepsilon}\int_0^T I_\varepsilon(t) \,dt,$$
which implies
$$\int_0^T I_\varepsilon(t) \,dt \le \frac{C+\tilde{M_1}+\frac{\tilde{M}_1}{\tilde{M}_2}}{2}e^{(\tilde{M}_2+D_3)T}\varepsilon=\tilde{c}(T)\varepsilon,$$
leading to
$$\int_0^T \int_\Omega |\partial_t v_\varepsilon|^2 \, dxdt \le \frac{\tilde{c}(T)}{R_1}.$$
Second, we want to estimate $\int_0^T \int_\Omega |\nabla v_\varepsilon|^2 \, dxdt$. \\
We know, by \eqref{groneid1}, that
$$E_\varepsilon(t) \le e^{D_3t}C+\frac{\tilde{M}_1}{\tilde{M}_2}e^{(\tilde{M}_2+D_3)t}\le (C+\frac{\tilde{M}_1}{\tilde{M}_2})e^{(\tilde{M}_2+D_3)t}.$$
By \eqref{nien1} we have:
$$R_\varepsilon(t)\le E_\varepsilon(t) +I_\varepsilon(t)+|Q_\varepsilon(t)|\le (1+D_3)E_\varepsilon(t)+\tilde{M}_1e^{\tilde{M}_2t}+I_\varepsilon(t).$$
Hence
\begin{align*}
\int_\Omega d(t) \cdot |\nabla v_\varepsilon(t)|^2 +\frac{\beta(t,x)}{2} \braket{v^2_\varepsilon(t),A(t,x)v^2_\varepsilon(t)} \, dx &= R_\varepsilon(t)+ \int_\Omega 2F(t,x,v_\varepsilon(t)) \, dx  \\
& \le R_\varepsilon(t)+ 2kH_1||U_2||e^{H_2t}  \\
& \le (1+D_3)E_\varepsilon(t)+2\tilde{M}_1e^{\tilde{M}_2t}+I_\varepsilon(t),
\end{align*}
implying, for every $T>0$, that
\begin{align*}
&\int_0^T \int_\Omega d(t) \cdot |\nabla v_\varepsilon|^2 +\frac{\beta(t,x)}{2} \braket{v^2_\varepsilon,A(t,x)v^2_\varepsilon} \, dxdt  \\
& \le (1+D_3)\frac{C+ \frac{\tilde{M}_1}{\tilde{M}_2}}{\tilde{M}_2+D_3}e^{(\tilde{M}_2+D_3)T}+2\frac{\tilde{M}_1}{\tilde{M}_2}e^{\tilde{M}_2T}+\tilde{c}(T)=\mathcal{C}(T).
\end{align*}
In particular, we have
$$\int_0^T \int_\Omega |\nabla v_\varepsilon|^2  \, dxdt \le \frac{\mathcal{C}(T)}{\tilde{\rho}(T)+\mu}.$$
Finally, for every $T>0$,
$$\int_0^T \int_\Omega |v_\varepsilon|^2 \, dxdt \le k|\Omega|T.$$
Defining 
$$\bar{C}(T)=\max \left\{\frac{\tilde{c}(T)}{R_1},\mathcal{C}(T),k|\Omega|T \right\},$$ 
we obtain \eqref{e:step1-conclusion-1}, \eqref{e:step1-conclusion-2}, and \eqref{e:step1-conclusion-3}.
We observe that the constant $\bar{C}(T)$ does not depend on $D_4$ and $\mu$. \medskip

\noindent {\bf Step 2.} We now prove that \cref{uniestimnn1} holds without the additional assumption \eqref{e:step1-hypo-ag}.\medskip

Let us consider $d(t)=(d_1(t), \dots, d_k(t))$ which satisfies the condition \eqref{ip6} of \cref{hyponcoeff}.
Let us consider, for every $m \in \mathbb{N}$, $m \ge 1$:
$$d_m(t):=\frac{e^{-D_2t}}{e^{-D_2t}+\frac{1}{m}}d(t)+\frac{1}{m},$$
which satisfies
\begin{enumerate}[\rm(a)]
\item $d_m(t)\le mD_1+1$;\smallskip
\item $d_m(t)\ge (1-\frac{1}{me^{-D_2t}+1})\rho(t)+\frac{1}{m}$;
\item $d_m(t) \le (D_1+1)e^{D_2t}$;\smallskip
\item $|d'_m(t)| \le (D_2+D_3)d_m(t)$.
\end{enumerate}
First of all, we notice that, for every fixed $m\in\N$, $d_m$ satisfies the hypothesis \eqref{e:step1-hypo-ag} from Step 1 with constants $D_4=mD_1+1$ and $\mu=\frac1m$, and with 
$$\hat\rho(t):=\left(1-\frac{1}{e^{-D_2t}+1}\right)\rho(t).$$ Precisely, thanks to the estimates (a) and (b), we have
$$\hat\rho(t)+\mu\le \left(1-\frac{1}{me^{-D_2t}+1}\right)\rho(t)+\frac{1}{m}\le d_m(t)\le mD_1+1=D_4.$$
Second, we notice that, thanks to (c) and (d), $d_m$ satisfies the estimates from the hypothesis \eqref{ip6} of \cref{hyponcoeff} with constants $D_1+1$ and $D_2+D_3$ in place of $D_1$ and $D_3$.\smallskip 

\noindent Let $\varepsilon \in (0,\bar{\varepsilon})$ be fixed. For every $m \in \mathbb{N}$, let us consider $v_{\varepsilon,m} \in \mathcal{U}$, with $0 \le v_{\varepsilon,m} \le 1$, minimizer in $\mathcal U$ of the functional
\begin{equation*}
\mathcal{F}_{\varepsilon,m}(v):=\int_0^{+\infty} \int_\Omega \frac{e^{-\frac{t}{\varepsilon}}}{\varepsilon} \left\{ \varepsilon r(x) \cdot |\partial_t v|^2 + d_m(t) \cdot |\nabla v|^2 -2F(t,x,v) + \frac{\beta(t,x)}{2} \braket{v^2,A(t,x)v^2} \right\} \, dxdt.
\end{equation*}
By the previous step, for every $T>0$, the estimates \eqref{e:step1-conclusion-1}, \eqref{e:step1-conclusion-2}, \eqref{e:step1-conclusion-3} hold. Since the constant $\bar C(T)$ in \eqref{e:step1-conclusion-1}, \eqref{e:step1-conclusion-2}, \eqref{e:step1-conclusion-3} depends on the bounds on $d_m$ from \eqref{ip6} of \cref{hyponcoeff}, in which $D_1$ and $D_3$ are replaced by $D_1+1$ and $D_2+D_3$, we get that there is a constant $\check{C}(T)$, independent of $\varepsilon$ and $m$, such that 
\begin{equation}\label{e:step2-estimate-1}
\int_0^T \int_\Omega |\partial_t v_{\varepsilon,m}|^2 \, dxdt \le \check{C}(T);
\end{equation}
\begin{equation}\label{e:step2-estimate-2}
\int_0^T \int_\Omega |\nabla v_{\varepsilon,m}|^2 \, dxdt \le \frac{\check{C}(T)}{\displaystyle\inf_{t\in[0,T]}\hat\rho(t)+\frac{1}{m}};
\end{equation}
\begin{equation}\label{e:step2-estimate-3}
\int_0^T \int_\Omega |v_{\varepsilon,m}|^2 \, dxdt \le \check{C}(T).
\end{equation}
Now, by the definition of $\hat\rho$ and $\tilde\rho$ we have 
$$\inf_{t\in[0,T]}\hat\rho(t)\ge {\left(1-\frac{1}{e^{-D_2T}+1}\right)\inf_{t\in[0,T]}\rho(t)}= {\left(1-\frac{1}{e^{-D_2T}+1}\right)\tilde{\rho}(T)}.$$
Substituting in \eqref{e:step2-estimate-2}, we obtain
\begin{align*}
\int_0^T \int_\Omega |\nabla v_{\varepsilon,m}|^2 \, dxdt 
&\le \frac{\check{C}(T)}{(1-\frac{1}{e^{-D_2T}+1})\tilde{\rho}(T)+\frac{1}{m}}\\
&\le  \frac{\check{C}(T)}{(1-\frac{1}{e^{-D_2T}+1})\tilde{\rho}(T)},
\end{align*}
where $\tilde \rho$ is defined in \eqref{e:definition-rho-tilde}. \\
Moreover, by \cref{appeq}, $v_{\varepsilon,m}$ solves, for every $\varphi \in H^1_0(\Omega \times (0,T))$, for every $T>0$, and for every $i=1,\dots,k$:
\begin{align} \label{nap01}
\notag &\int_0^{+\infty} \int_\Omega \{ \varepsilon r_i(x)\partial_tv_{\varepsilon,m,i}\partial_t \varphi+r_i(x)\partial_t v_{\varepsilon,m,i}\varphi+d_{m,i}(t)\nabla v_{\varepsilon,m,i}\cdot \nabla \varphi -f_i(t,x,v_{\varepsilon,m,i})\varphi \\
& \qquad \qquad  +\beta(t,x)v_{\varepsilon,m,i}\sum_{j \ne i} a_{ij}(t,x)v^2_{\varepsilon,m,j}\varphi \} \, dxdt=0.
\end{align}
Thus, there exist a subsequence (not relabeled) $\set{v_{\varepsilon,m}}_m$ and a limit function $\bar{v}_\varepsilon \in \mathcal U$ such that, for every $T>0$
$$v_{\varepsilon,m} \xrightharpoonup[m \to +\infty]{} \bar{v}_\varepsilon \quad \text{in $H^1(\Omega \times (0,T))^k$ and a.e.}.$$
Moreover $0 \le \bar{v}_\varepsilon \le 1$, and, by weak convergence:
\begin{align*}
&\int_0^T \int_\Omega |\partial_t \bar{v}_\varepsilon|^2 \, dxdt \le \check{C}(T); \\ 
&\int_0^T \int_\Omega |\nabla \bar{v}_\varepsilon|^2 \, dxdt \le \frac{\check{C}(T)}{(1-\frac{1}{e^{-D_2T}+1})\tilde{\rho}(T)}; \\ 
&\int_0^T \int_\Omega |\bar{v}_\varepsilon|^2 \, dxdt \le \check{C}(T). 
\end{align*}
For every $\varphi \in H^1_0(\Omega \times (0,T))$, because $d_{m,i}(t) \le (D_1+1)e^{D_2T}$, and $d_{m,i}(t) \to d_i(t)$ a.e. for $m \to +\infty$, for all $i=1, \dots,k$, by dominate convergence we get 
$$d_{m,i}(t) \nabla \varphi \to d_i(t) \nabla \varphi \quad \text{strongly in $L^2(\Omega \times (0,T))^d$},$$
hence
$$\int_0^T \int_\Omega d_{m,i}(t)\nabla v_{\varepsilon,m,i}\cdot \nabla \varphi \, dxdt \to \int_0^T \int_\Omega d_i(t) \nabla \bar{v}_{\varepsilon,i} \cdot \nabla \varphi \,dxdt.$$
Similarly for the other terms, by the fact that for every $m$, $v_{\varepsilon,m}$ solves \eqref{nap01}, passing to the limit we get that $\bar{v}_\varepsilon$ solves in a weak sense \eqref{equaappr1} or \eqref{equaappr2}.
\end{proof}
At this point, we are ready to show the main result.
\begin{proof} [\bf Proof of \cref{existence}] 
Thanks to \cref{uniestimnn1}, there exist a subsequence $\set{\bar{v}_{\varepsilon_n}}$ and a function $\bar{w} \in \cap_{T>0} H^1(\Omega \times (0,T))^k$ such that
$$\bar{v}_{\varepsilon_n} \xrightharpoonup[n \to +\infty]{} \bar{w} \quad \text{in} \,\, H^1(\Omega \times (0,T))^k \,\, \forall T>0, \,\, \text{and a.e. in} \,\, \Omega \times \mathbb{R}^+.$$
Since $\mathcal{U}$ is convex and strongly closed, $\bar{w} \in \mathcal{U}$. Moreover $0 \le \bar{w} \le 1$. \\
By \cref{uniestimnn1}, for every $\varphi \in H^1_0(\Omega \times (0,T))$, for every $T>0$, for every $i=1,\dots,k$ and for every $n\in \mathbb{N}$, we have
\begin{align*}
&\int_0^{+\infty} \int_\Omega \{ \varepsilon_n r_i(x)\partial_t \bar{v}_{\varepsilon_n,i} \partial_t \varphi+ r_i(x)\partial_t \bar{v}_{\varepsilon_n,i}\varphi+d_i(t)\nabla \bar{v}_{\varepsilon_n,i}\cdot \nabla \varphi -f_i(t,x,\bar{v}_{\varepsilon_n,i})\varphi \\
&\qquad  \qquad +\beta(t,x)\bar{v}_{\varepsilon_n,i}\sum_{j \ne i} a_{ij}(t,x)\bar{v}^2_{\varepsilon_n,j}\varphi \} \, dxdt=0.
\end{align*}
Let us consider $\varphi \in  H^1_0(\Omega \times (0, \bar{T}))$ with null extension in $\Omega \times \mathbb{R}^+$. Let $i \in \set{1,\dots,k}$ be fixed. \\
Recalling the conditions of \cref{hyponcoeff}, $d_i(t) \in L^\infty((0,\bar{T}))$, and $r_i(x) \in L^\infty(\Omega)$: thus we get, for $\varepsilon_n \to 0$:
\begin{align*}
& r_i(x)\partial_t \bar{v}_{\varepsilon_n,i} \rightharpoonup r_i(x)\partial_t \bar{w}_i \quad \text{in} \,\, L^2(\Omega \times (0, \bar{T})); \\
& d_i(t) \nabla \bar{v}_{\varepsilon_n,i} \rightharpoonup d_i(t)\nabla \bar{w}_i \quad \text{in} \,\, L^2(\Omega \times (0, \bar{T})); \\
& f_i(t,x,\bar{v}_{\varepsilon_n,i}) \to f_i(t,x,\bar{w}_i) \quad \text{a.e.}; \\
& \bar{v}_{\varepsilon_n,i} \sum_{j \ne i} a_{ij}(t,x) \bar{v}^2_{\varepsilon_n,j} \to \bar{w}_i \sum_{j \ne i} a_{ij}(t,x)\bar{w}^2_j \quad \text{a.e.}.
\end{align*}
By weak convergence, and dominate convergence we obtain 
\begin{equation*}
\int_0^{+\infty} \int_\Omega r_i(x)\partial_t \bar{w}_{i}\varphi+d_i(t)\nabla \bar{w}_{i}\cdot \nabla \varphi -f_i(t,x,\bar{w}_{i})\varphi+\beta(t,x)\bar{w}_{i}\sum_{j \ne i} a_{ij}(t,x)\bar{w}^2_{j}\varphi \, dxdt=0.
\end{equation*}
\end{proof}
\begin{corollary}[Local H\"older continuity]
Let $w=(w_1,\dots w_k)$ be a solution found in \cref{existence}, then it is, as function in time, a $\frac{1}{2}$ locally H\"older function with respect to $||\cdot||_{L^2(\Omega)^k}$.
\end{corollary}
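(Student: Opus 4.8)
The plan is to deduce the Hölder estimate directly from the fact, established above, that $w\in\cap_{T>0}H^1(\Omega\times(0,T))^k$ together with the quantitative bound $\int_0^N\int_\Omega|\partial_t\bar w|^2\,dx\,dt\le\bar C'(N)$ carried over to the limit in the proof of \cref{existence}. The underlying principle is the elementary embedding of an $H^1$ function of time with values in a Hilbert space into the space of $\tfrac12$-Hölder functions; here the Hilbert space is $L^2(\Omega)^k$.

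Fix $T>0$. By Fubini's theorem, for a.e.\ $x\in\Omega$ and every $i=1,\dots,k$ the map $\tau\mapsto w_i(\tau,x)$ belongs to $H^1(0,T)$, hence, after modification on a null set of times, it is absolutely continuous on $[0,T]$ and satisfies the fundamental theorem of calculus
\[
w_i(t,x)-w_i(s,x)=\int_s^t\partial_\tau w_i(\tau,x)\,d\tau\qquad\text{for }0\le s\le t\le T .
\]
By the Cauchy--Schwarz inequality in the $\tau$ variable, $|w_i(t,x)-w_i(s,x)|^2\le (t-s)\int_s^t|\partial_\tau w_i(\tau,x)|^2\,d\tau$; summing over $i$ and integrating over $\Omega$ (Tonelli) gives
\[
\norm{w(t)-w(s)}{L^2(\Omega)^k}^2\le (t-s)\int_s^t\!\!\int_\Omega|\partial_\tau w|^2\,dx\,d\tau\le (t-s)\,\bar C'(T).
\]
Therefore $\norm{w(t)-w(s)}{L^2(\Omega)^k}\le\sqrt{\bar C'(T)}\,|t-s|^{1/2}$ for all $s,t\in[0,T]$, which is precisely the claimed $\tfrac12$-Hölder bound on $[0,T]$. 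Since $T>0$ is arbitrary, $t\mapsto w(\cdot,t)$ is locally $\tfrac12$-Hölder continuous from $[0,+\infty)$ into $L^2(\Omega)^k$. This common choice of representative in particular makes $w$ continuous up to $t=0$, consistently with the trace $w(\cdot,0)=v_0$ used in the definition of $\mathcal U$.

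The only point requiring a word of care is the joint measurability of the pointwise-in-$x$ absolutely continuous representative selected above; this is standard (e.g.\ replace $w$ by the Borel function $\widetilde w(t,x):=\limsup_{h\to0^+}\tfrac1h\int_t^{t+h}w(\tau,x)\,d\tau$, which agrees with $w$ almost everywhere and is the absolutely continuous representative in $\tau$ for a.e.\ $x$). Equivalently one may simply invoke the continuous embedding $H^1\big((0,T);L^2(\Omega)^k\big)\hookrightarrow C^{0,1/2}\big([0,T];L^2(\Omega)^k\big)$. There is no substantial obstacle: the entire content of the statement is the uniform $L^2$-bound on $\partial_t w$ over compact time intervals, already available from the uniform estimates.
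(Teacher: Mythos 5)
Your argument is correct and is essentially the paper's own proof: both rest on the uniform bound $\int_0^T\int_\Omega|\partial_t w|^2\,dx\,dt\le C(T)$ inherited from the uniform estimates, followed by the fundamental theorem of calculus in time and Cauchy--Schwarz (the paper phrases this via $w_i\in H^1((0,\bar T);L^2(\Omega))$ and Jensen, i.e.\ exactly the embedding $H^1((0,T);L^2(\Omega)^k)\hookrightarrow C^{0,1/2}([0,T];L^2(\Omega)^k)$ you mention as the alternative). Your pointwise-in-$x$ Fubini variant is a harmless reformulation of the same computation.
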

\begin{proof}
For all $T>0$ we get, by weak convergence, that there exists a constant $\hat{C}(T)$ such that
$$\int_0^T \int_\Omega |\partial_t w|^2 \, dxdt = \int_0^T ||\partial_t w||^2_{L^2(\Omega)^k} \, dt \le \hat{C}(T).$$
Let $t_1,t_2 \in [0,\bar{T}]$. \\
Because $w_i \in H^1((0,\bar{T}),L^2(\Omega))$ for every $i=1, \dots,k$,
$$w_i(t_2)=w_i(t_1)+\int_{t_1}^{t_2} \partial_t w_i(s) \, ds,$$ 
hence
$$||w_i(t_2)-w_i(t_1)||_{L^2(\Omega)} \le \int_{t_1}^{t_2} ||\partial_t w_i(s)||_{L^2(\Omega)} \, ds.$$
By Jensen inequality
$$||w(t_2)-w(t_1)||^2_{L^2(\Omega)^k}\le \sum_{i=1}^k \int_{t_1}^{t_2} ||\partial_t w_i(s)||^2_{L^2(\Omega)} \, ds |t_2-t_1| \le \hat{C}(\bar{T})|t_2-t_1|,$$
which implies:
\begin{equation*}
||w(t_2)-w(t_1)||_{L^2(\Omega)^k} \le \sqrt{\hat{C}(\bar{T})} |t_2-t_1|^{\frac{1}{2}}.
\end{equation*}
\end{proof}

\section{Asymptotic properties and segregation condition}

We want to show asymptotic properties of a weak solution of our parabolic system: assuming the conditions of \cref{hyponcoeff}, the purpose is to show that, if there exist a function $b:[0,+\infty) \to \mathbb{R}^+$ such that $\beta(t,x) \ge b(t) \,\, \forall \, (t,x) \in [0,+\infty) \times \Omega$, and $\lim_{t \to +\infty} b(t)=+\infty$, a constant $\bar{\mu}>0$ such that $a_{i,j}(t,x) \ge \bar{\mu} \,\, \forall \, i\ne j, \,\, \forall \, (t,x) \in [0,+\infty) \times \Omega$, and if $D_2,H_2=0$, then, if there exists the limit in time of our weak solution, this limit is a segregated function.

\begin{lemma} \label{sq}
Let us assume the conditions of \cref{hyponcoeff}. \\
Let $w=(w_1, \dots ,w_k) \in \cap_{T>0} H^1(\Omega \times (0,T))^k$ be a weak solution, in the sense of $H^{-1}(\Omega \times (0,T))^k$ for all $T>0$, of 
\begin{equation} \label{nnnn2}
\begin{cases}
r_i(x)\partial_t w_{i} -d_i(t)\Delta w_{i}=f_i(t,x,w_{i})-\beta(t,x)w_{i}\sum_{j\ne i}a_{ij}(t,x)w^2_{j} \quad \forall i=1,\dots,k, \\
0 \le w_{i} \le 1 \quad \forall i=1,\dots,k,
\end{cases}
\end{equation}
then, for every $T>0$, and for every $\delta>0$ and $\tau>0$, there exists a constant $C_{\delta}$ independent of $T$ and $\tau$ such that, for every $i=1,\dots,k$
\begin{align}
&\notag \int_T^{T+\tau} \int_{\Omega_\delta} d_i(t)|\nabla w_i|^2 +\beta(t,x) w^2_i \sum_{j \ne i} a_{ij}(t,x)w^2_j \, dxdt  \\
&\le C_{\delta}\biggl(1+\int_{T}^{T+\tau} d_i(t) \, dt+\int_{T}^{T+\tau} \int_\Omega |f_i(t,x,w_i)| \, dxdt\biggr),
\end{align}
where $\Omega_\delta=\set{x \in \Omega \,:\, d(x, \partial \Omega) > \delta}$. \\
Moreover, if $w$ is such that $w=0$ on $\partial \Omega$ for every $t$, there exists $C$ $\delta$-independent such that
\begin{equation*}
\int_T^{T+\tau} \int_{\Omega} d_i(t)|\nabla w_i|^2 +\beta(t,x) w^2_i \sum_{j \ne i} a_{ij}(t,x)w^2_j \, dxdt \le C\biggl(1+\int_{T}^{T+\tau} \int_\Omega |f_i(t,x,w_i)| \, dxdt\biggr).
\end{equation*}
\end{lemma}
\begin{proof}
Let us fix $\delta>0$, $\tau>0$ and $T>0$. 
Let us consider $0<\sigma<1$ such that $T>2\sigma$. \\
Let us take $\gamma_\delta=\gamma_\delta(x)$ a cut-off function of $\bar{\Omega}_\delta$ in $\Omega_{\frac{\delta}{2}}$. \\
Let us consider also a function $\alpha_{T,\tau}=\alpha_{T,\tau}(t)$ so defined:
\begin{equation*}
\alpha_{T,\tau}(t)= \begin{cases}
0 \quad &\text{in} \, [0,T-\sigma] \\
\frac{1}{\sigma}(t-(T-\sigma)) \quad &\text{in} \, (T-\sigma,T) \\
1 \quad &\text{in} \, [T,T+\tau] \\
-\frac{1}{\sigma}(t-(T+\tau+\sigma)) \quad &\text{in} \, (T+\tau,T+\tau+\sigma) \\
0 \quad &\text{in} \, [T+\tau+\sigma,+\infty). \\
\end{cases}
\end{equation*}
By the fact that $w$ is a weak solution of \eqref{nnnn2}, for every $\varphi \in H^1_0(\Omega \times (0,T))$, for every $T>0$, and for every $i=1,\dots,k$, we have
$$\int_0^{+\infty} \int_\Omega r_i(x)\partial_t w_{i}\varphi+d_i(t)\nabla w_{i}\cdot \nabla \varphi -f_i(t,x,w_{i})\varphi+\beta(t,x)w_{i}\sum_{j \ne i} a_{ij}(t,x)w^2_{j}\varphi \, dxdt=0.$$
Let us test, for every $i=1,\dots,k$, with $\varphi(t,x):=w_i(t,x)\alpha_{T,\tau}(t) \gamma^2_\delta(x)$. \\
Let us do the following computations:
\begin{align*}
\int_0^{+\infty} \int_\Omega r_i(x)\partial_t w_i \varphi \, dxdt &= \int_0^{+\infty} \alpha_{T,\tau}(t) \int_\Omega \partial_t(w_i\gamma_\delta \sqrt{r_i}) w_i\gamma_\delta \sqrt{r_i} \, dxdt \\
&= \frac{1}{2} \int_0^{+\infty} \alpha_{T,\tau}(t) \frac{d}{dt}||w_i\gamma_\delta \sqrt{r_i}||^2_{L^2(\Omega)} \, dt  \\
&=-\frac{1}{2} \int_0^{+\infty} \alpha'_{T,\tau}(t) ||w_i\gamma_\delta \sqrt{r_i}||^2_{L^2(\Omega)} \, dt  \\
&\ge -\frac{R_2|\Omega|}{2} \int_0^{+\infty} |\alpha'_{T,\tau}(t)| \, dt=-R_2|\Omega|;
\end{align*}
\begin{align*}
\int_0^{+\infty} \int_\Omega d_i(t)\nabla w_i \cdot \nabla \varphi \, dxdt &= \int_0^{+\infty} \alpha_{T,\tau}(t) d_i(t) \int_\Omega \gamma^2_\delta |\nabla w_i|^2 +2w_i \gamma_\delta \nabla w_i \cdot \nabla \gamma_\delta \, dxdt  \\
& \ge \int_0^{+\infty} \alpha_{T,\tau}(t)d_i(t) \int_\Omega \frac{1}{2}\gamma^2_\delta |\nabla w_i|^2 -2 |\nabla \gamma_\delta|^2 \, dxdt  \\
& \ge \frac{1}{2} \int_T^{T+\tau} \int_{\Omega_\delta} d_i(t)|\nabla w_i|^2 \, dxdt -2||\nabla \gamma_\delta||^2_{L^2(\Omega)}\int_{T-\sigma}^{T+\tau+\sigma} d_i(t) \, dt;
\end{align*}
\begin{equation*}
\int_0^{+\infty} \int_\Omega f_i(t,x,w_i)\varphi \, dxdt \le \int_{T-\sigma}^{T+\tau+\sigma} \int_\Omega |f_i(t,x,w_i)| \, dxdt;
\end{equation*}
\begin{equation*}
\int_0^{+\infty} \int_\Omega \beta(t,x)w_{i}\sum_{j \ne i} a_{ij}(t,x)w^2_{j}\varphi \, dxdt  \ge \int_T^{T+\tau} \int_{\Omega_\delta} \beta(t,x)w^2_{i}\sum_{j \ne i} a_{ij}(t,x)w^2_{j} \, dxdt.
\end{equation*}
Hence, for $\sigma \to 0$ we get:
\begin{align*}
&\int_T^{T+\tau} \int_{\Omega_\delta} d_i(t)|\nabla w_i|^2 +\beta(t,x) w^2_i \sum_{j \ne i} a_{ij}(t,x)w^2_j \, dxdt  \\
&\le R_2|\Omega|+2||\nabla \gamma_\delta||^2_{L^2(\Omega)}\int_{T}^{T+\tau} d_i(t) \, dt+\int_{T}^{T+\tau} \int_\Omega |f_i(t,x,w_i)| \, dxdt \\
&\le C_{\delta}(1+\int_{T}^{T+\tau} d_i(t) \, dt+\int_{T}^{T+\tau} \int_\Omega |f_i(t,x,w_i)| \, dxdt),
\end{align*}
where $C_{\delta}:=2R_2|\Omega|+4||\nabla \gamma_\delta||^2_{L^2(\Omega)}+2$. \\
If $w=0$ on $\partial \Omega$, taking $\varphi(t,x):=w_i(t,x)\alpha_{T,\tau}(t)$, and similarly to the previous passages we obtain the thesis.
\end{proof}
\begin{corollary} \label{sq2}
Under the same hypotheses of \cref{sq}, let us assume also that $D_2,H_2=0$ in \eqref{ip2} and \eqref{ip6} of \cref{hyponcoeff}. Then for every $T>0$, and for every $\delta>0$ and $\tau>0$, there exists a constant $C_{\delta,\tau}$ independent of $T$ such that, for every $i=1,\dots,k$
\begin{equation} \label{nnnn3}
\int_T^{T+\tau} \int_{\Omega_\delta} d_i(t)|\nabla w_i|^2 +\beta(t,x) w^2_i \sum_{j \ne i} a_{ij}(t,x)w^2_j \, dxdt \le C_{\delta,\tau},
\end{equation}
where $\Omega_\delta=\set{x \in \Omega \,:\, d(x, \partial \Omega) > \delta}$. \\
Moreover, if $w$ is such that $w=0$ on $\partial \Omega$ for every $t$, the constant is $\delta$-independent, and \eqref{nnnn3} is true with $\Omega$ instead of $\Omega_\delta$.
\end{corollary}
\begin{proof}
By the fact that $D_2,H_2=0$, we get
\begin{align*}
&\int_{T}^{T+\tau} d_i(t) \, dt \le D_1\tau; \\
&\int_{T}^{T+\tau} \int_\Omega |f_i(t,x,w_i)| \, dxdt \le H_1||U_2||\tau.
\end{align*}
So we obtain the thesis.
\end{proof}
\begin{proof} [\bf Proof of \cref{lsp}]
Let us fix $i \in \set{1,\dots,k}$. \\
By \cref{sq2}, for every $\delta>0$ there exists a constant $C_{\delta,1}>0$ such that
$$\int_T^{T+1} \int_{\Omega_\delta} \beta(t,x) w^2_i \sum_{j \ne i} a_{ij}(t,x)w^2_j \, dxdt \le C_{\delta,1}.$$
For $T$ sufficiently large, such that $b(t) \ge \eta >0$ for all $t \ge T$,
$$\int_T^{T+1} \int_{\Omega_\delta} w^2_i \sum_{j \ne i} \bar{\mu}w^2_j \, dxdt \le \frac{C_{\delta,1}}{\inf_{[T,T+1]}b(t)}.$$
Let us fix $\delta$. \\
Let us define
$$V_T:= \biggl\{t \in[T,T+1] \,: \, \int_{\Omega_\delta} w^2_i \sum_{j \ne i} \bar{\mu}w^2_j \, dx \le \frac{4C_{\delta,1}}{\inf_{[T,T+1]}b(t)}\biggr\} .$$
Let us observe that:
$$|V_T| \ge \frac{1}{2}.$$
If this were not true, we would have that 
$$\int_T^{T+1} \int_{\Omega_\delta} w^2_i \sum_{j \ne i} \bar{\mu}w^2_j \, dxdt \ge \frac{2C_{\delta,1}}{\inf_{[T,T+1]}b(t)},$$ 
that is false. \\
Let us take, for $n$ sufficiently large, a sequence $\set{t_n}_n$ such that $t_n \in V_n$. \\
The fact that $w_i(t_n) \to_{n \to +\infty} \bar{v}_i$ in $L^1(\Omega_\delta)$ implies that there exists a subsequence $\set{t_{n_h}}_h$ such that
$$w_i(t_{n_h}) \xrightarrow[h \to +\infty]{} \bar{v}_i \quad \text{a.e. in} \,\, \Omega_\delta.$$
Hence, by Fatou lemma
\begin{align*}
\int_{\Omega_\delta} \bar{v}^2_i \sum_{j \ne i} \bar{\mu} \bar{v}^2_j \, dx & \le \liminf_{h \to +\infty}\int_{\Omega_\delta}  w^2_i(t_{n_h}) \sum_{j \ne i} \bar{\mu} w^2_j(t_{n_h}) \, dx \\
& \le  \liminf_{h \to +\infty} \frac{4C_{\delta,1}}{\inf_{[t_{n_h},t_{n_h}+1]} b(t)}=0.
\end{align*}
Thus $\bar{v}^2_i \sum_{j \ne i} \bar{\mu} \bar{v}^2_j=0$ a.e. in $\Omega_\delta$, which implies
$$\bar{v}_i\bar{v}_j=0 \quad \text{in} \,\, \Omega_\delta, \quad \forall \, j \ne i.$$
For the arbitrariness of $\delta$ we get
$$\bar{v}_i\bar{v}_j=0 \quad \text{in} \,\, \Omega, \quad \forall \, j \ne i.$$
Because it is true for all $i=1,\dots,k$, we obtain the thesis.
\end{proof}

\section*{Acknowledgements} This work was partially supported by the European Research Council (ERC), under the European Union's Horizon 2020 research and innovation program, through the project ERC VAREG - {\em Variational approach to the regularity of the free boundaries} (No.\,853404). The author would like to thank Bozhidar Velichkov for the discussions.

\bibliographystyle{aomalpha} 

\end{document}